\documentclass[12pt,a4paper,american]{article}
\usepackage[latin9]{inputenc}
\usepackage{amsmath}
\usepackage{amssymb}
\usepackage{todonotes}

\makeatletter

\newcommand{\lyxaddress}[1]{
	\par {\raggedright #1
	\vspace{1.4em}
	\noindent\par}
}

\@ifundefined{date}{}{\date{}}

\usepackage{amsfonts}
\usepackage{amsthm}
\usepackage{mathrsfs}

\usepackage{relsize}

\newtheorem{theorem}{Theorem}
\newtheorem{proposition}[theorem]{Proposition}
\newtheorem{lemma}[theorem]{Lemma}
\newtheorem{corollary}[theorem]{Corollary}
\newtheorem*{lemma*}{Lemma}

\theoremstyle{remark}
\newtheorem{remark}[theorem]{Remark}
\newtheorem*{remark*}{Remark}
\newtheorem*{remarks*}{Remarks}
\newtheorem*{example*}{Example}
\newtheorem*{question*}{QUESTION}
\newtheorem*{conjecture*}{CONJECTURE}

\theoremstyle{definition}
\newtheorem{definition}[theorem]{Definition}
\newtheorem*{definition*}{Definition}
\newtheorem{notation}[theorem]{Notation}
\newtheorem*{notation*}{Notation}

\newcommand{\diag}{\mathop\mathrm{diag}\nolimits}

\makeatother

\makeatletter

\newcommand\appsection{\@startsection {section}{1}{\z@}%
{-3.5ex \@plus -1ex \@minus -.2ex}%
{2.3ex \@plus.2ex}%
{\normalfont\Large\bfseries\noindent%
Appendix.\hspace{0.5em}}}

\newtheorem{athm}{Theorem}[section]
\newtheorem{alem}[athm]{Lemma}

\newtheorem{acor}[athm]{Corollary}

\theoremstyle{remark}
\newtheorem{arem}[athm]{Remark}

\makeatother

\begin{document}
\title{Matrices with cyclically monotone rows and Cantor numeration systems}
\author{Pavel \v{S}\v{t}ov\'{\i}\v{c}ek and Edita Pelantov\'{a}  }
\maketitle

\lyxaddress{
\begin{center}
\noindent\emph{Department of Mathematics, Faculty of Nuclear Science}\\
\emph{Czech Technical University in Prague}\\
\emph{Trojanova 13, 120~00 Praha, Czech Republic}\\
\texttt{edita.pelantova@fjfi.cvut.cz, stovicek@fjfi.cvut.cz}
\end{center}
}

\begin{abstract}
We study a class of square matrices with non-negative elements which have
cyclically monotone rows in the sense that each row of a matrix from the class
consists of a cyclically non-increasing sequence of numbers starting
from a maximal element on the diagonal.
We prove that if every diagonal element is strictly larger than all other elements
in the respective row, then the matrix is regular.
This property enables us to solve an open problem that comes from the theory
of non-standard numeration systems, also called Cantor numeration systems.
The problem concerns a one-to-one relationship between Cantor real bases,
which are supposed to be alternate, that is, periodic with a period $p$,
and lists of $p$ sequences of non-negative integers satisfying the so-called
Parry condition.

\end{abstract}

\medskip

\noindent {\bf Keywords}: non-negative matrices; cyclically monotone rows; Cantor real numeration system.

\medskip

\noindent {\bf MSC } 15B48, 11A67, 11C20

\medskip

\section{Introduction}

Our main motivation in this paper is an open problem concerning the Cantor numeration systems.
To solve it, we introduce and study a special class of square matrices whose rows are
cyclically monotone. In more detail, each row of a square matrix from this class
consists of a cyclically non--increasing sequence of non-negative numbers
starting at the maximal element lying on the diagonal,
see Section \ref{sec:cyclicMonotone} for a formal definition.
Theorem \ref{thm:hlavni}, also in Section \ref{sec:cyclicMonotone}, contains the main result
of our article stating that the determinant of every matrix from this class is non-negative and,
under a somewhat stricter assumption, the matrix is even regular.
Let us note that the theorem may also be regarded as a partial generalization of a known result
valid for circulant matrices \cite[Prop. 24]{KrSi2012}.

Let us now briefly describe the nature and history of the above mentioned open problem.
The history goes back to Cantor, who was the first to use positional number systems
in which instead of one fixed integer $\beta >1$ chosen as a  base,
as customary in the decimal or binary system,
a sequence  $\mathcal{B}=(\beta_i)_{i\geq 0}$ of integers $\beta_i>1$ is employed.
In this numeration system, the $\mathcal{B}$-expansion of a number $x\in[0,1)$
- usually denoted  $(x)_{\mathcal{B}}$ - is a string
$a_0a_1 a_2 \cdots$ of integers $a_n$, $0\le a_n<\beta_n$, satisfying
 \begin{equation}\label{eq:Cantor-series}
    x = \sum_{n=0}^{+\infty}  \frac{a_n}{\beta_0\beta_1\cdots \beta_{n}}\,.
 \end{equation}
Cantor focused on the problem of specifying conditions that guaranty the number defined
by the series (\ref{eq:Cantor-series}) to be irrational.

It seems that Cantor's original ideas had to wait almost a hundred years
for further intensive development.
In 1954, Cantor's criterion for the irrationality of numbers defined by series
(\ref{eq:Cantor-series}) was generalized by Oppenheim \cite{Op1954}.
Further generalizations have been later presented, for example, in papers by Han\v{c}l and Tijdeman,
see \cite{HaTi2004,HaTi2008}. Another problem related to the Cantor series (\ref{eq:Cantor-series})
concerns statistical properties of the distribution of coefficients $a_n$. This topic
was already addressed in the 1950s by Erd\H{o}s and R\'enyi \cite{ErRe1959}, by Tur\'an \cite{Turan1956},
and later in the 1960s by \v{S}al\'at \cite{Salat1968}.
Furthermore, the question of the normality of numbers in this numeral system has become the focus
of recently published articles due to Airey and Mance and their collaborators, see \cite{AiMa2016}.
The numeration system, as originally introduced by Cantor, is a frequent subject of research
up to the present time, for instance, from the point of view of the theory of number approximations.
In this regard, let us just mention a very recent paper \cite{CHZL2026}.


While Cantor in \cite{Ca1869} admitted only integer sequences $\mathcal{B}$,
Caalim and Demegillo in \cite{CaDe2020} and, independently of them, Charlier and Cisternino
in \cite{ChaCi2021},
being inspired by the beta expansion of real numbers introduced by R\'enyi \cite{Renyi1957},
began to study {\it real Cantor numeration systems}.
In such systems $\mathcal{B}$ is a sequence of real numbers greater than one satisfying
$\prod_{n=0}^{+\infty}{\beta_n}=+\infty$.
Furthermore, condition $0\le a_n<\beta_n$ is replaced by another one that requires
$a_0a_1 a_2 \cdots$ to be the lexicographically greatest string
such that (\ref{eq:Cantor-series}) holds (in the case of integer bases, both conditions
are equivalent). $\mathcal{B}$ is called a \emph{Cantor real base}.
If the sequence $\mathcal{B}$ is purely periodic, the number system is called
{\it alternate}. As shown in \cite{CaDe2020} and \cite{ChaCi2021}, to an alternate base $\mathcal{B}$ with period $p$ one can assign $p$ integer sequences
${\bf d}^{(\ell)} = (d_{n}^{(\ell)})_{n\geq 0}$, $\ell =0,1,\ldots, p-1$,
with the help of which it can be decided whether a string $a_1a_2a_3\cdots$ represents
some number $x\in [0,1)$.

An important characteristic of sequences ${\bf d}^{(\ell)}$ assigned to a base $\mathcal{B}$
is that they satisfy the so-called Parry condition.
However, conversely, the question remained unanswered until recently
whether a base $\mathcal{B}$ exists for any list of sequences that satisfies
the Parry condition and, if so, whether it is uniquely given.
In a recently published article \cite{ChaKrMaPe2026}, the question of existence has been answered in the affirmative.
Furthermore, uniqueness has also been proven, but in some particular cases only, namely,
in \cite{ChCiMaPe2025}, in the case
where every ${\bf d}^{(\ell)}$ is eventually periodic and, in \cite{ChaKrMaPe2026},
in the case where every sequence  ${\bf d}^{(\ell)}={d}^{(\ell)}_0{d}^{(\ell)}_1\cdots$
starts with a number  ${d}^{(\ell)}_0\geq 2$.

In the present paper, we also contribute to the solution of this problem.
In Sections \ref{sec:alternatePowerSeries} and \ref{sec:sequencesParry},
with the aid of algebraic tools developed in Section \ref{sec:cyclicMonotone},
we are able to prove the uniqueness of the base  $\mathcal{B}$ for every list
${\bf d}^{(\ell)} = (d_{n}^{(\ell)})_{n\geq 0}$, $\ell =0,1,\ldots, p-1$,
of sequences that satisfy the Parry condition.

\section{Matrices with cyclically monotone rows}\label{sec:cyclicMonotone}
\begin{notation}
As common, $\mathbb{Z}_{p}$, $p\in\mathbb{N}$, stands for the cyclic
group of order $p$. As a set,
\[
\mathbb{Z}_{p}:=\{0,1,\ldots,p-1\},
\]
and the group operation in $\mathbb{Z}_{p}$ is the modular addition.

To simplify notation, {\em whenever an addition $j+k$, with $j,k\in\mathbb{Z}_{p}$,
occurs in any expression, the addition is understood as $j+k\mod p$}.

The symbol $\mathbb{N}$ stands for the set of positive integers, no special symbol
will be used for non-negative integers.
\end{notation}

In this section, we introduce and deal with a special class of square matrices.
\begin{definition}
A square matrix
\[
A=(a_{j,k})_{0\leq j,k<p},\ p\in\mathbb{N},
\]
is said to have \emph{cyclically monotone rows} if its entries are all non-negative
and its rows are cyclically non--increasing in the following sense:
\begin{equation}
 \forall j\in\mathbb{Z}_{p},\forall k\in\mathbb{Z}_{p}\setminus\{p-1\},\ a_{j,j+k}
 \geq a_{j,j+k+1}.\label{eq:c-monotone}
\end{equation}
If,  moreover, either
\begin{equation}\label{eq:strict-c-monotone-1}
p=1 \ \text{ and }\  a_{0,0} >0
\end{equation}
or
\begin{equation}\label{eq:strict-c-monotone}
p\geq 2 \ \text{ and }
 \ a_{j,j}>a_{j,j+1}\ \ \text{for every\ } j \in \mathbb{Z}_p
 \end{equation}
we say that $A$ is a matrix with \emph{strictly cyclically monotone rows}.
\end{definition}
\begin{example*}
Consider the matrices
\begin{equation*}
\left(\begin{matrix}5&4&3\\
1&2&2\\
2&2&3\\
\end{matrix}\right)\
\ \text{and}\ \
\left(\begin{matrix}5&4&4\\
1&2&1\\
2&2&3\\
\end{matrix}\right)\!.
\end{equation*}
The former has cyclically monotone rows, but not strictly, while
the latter one has strictly cyclically monotone rows.

\end{example*}
\begin{remark}\label{rem:simpleProperties}
Let us list some simple properties of matrices with (strictly) cyclically monotone rows
that we will use in some proofs to follow.
\begin{enumerate}
\item
Both conditions \eqref{eq:c-monotone} and \eqref{eq:strict-c-monotone}
are invariant with respect to
simultaneous cyclic permutation of rows and columns. Formally,  let
\begin{equation}
A'=(a'_{j,k})_{0\leq j,k<p},\ a'_{j,k}:=a_{j+1,k+1}\ \,\text{for all}\ j,k\in\mathbb{Z}_{p}.
\label{eq:cyclic-permute}
\end{equation}
If $A$ has cyclically monotone rows or strictly cyclically monotone rows,
then the same is true for $A'$. Moreover, $\det A'=\det A$.
\item
If $A_1, \ldots, A_m$ are matrices with cyclically monotone rows and
$\lambda_1, \ldots, \lambda_m$ are non-negative numbers, then the sum
\begin{equation*}
\sum_{k=1}^m \lambda_k A_k
\end{equation*}
is a matrix with cyclically monotone rows. If, moreover, there exists a summation index
$k$ such that $A_k$ has strictly cyclically monotone rows and $\lambda_k>0$
then the sum is also a matrix
with strictly cyclically monotone rows.
\item
Let $B$ be a principal submatrix of a $p\times p$ matrix $A$ (that is, a submatrix determined by a proper subset $I\subset\{0,1,\ldots,p-1\}$ and obtained from $A$
by removing rows and columns whose row index, respectively, column index belongs to $I$).
If $A$ has (strictly) cyclically monotone rows, then $B$ has (strictly) cyclically
monotone rows as well.
\item
Let $A$ be a matrix of size $p\in\mathbb{N}$ with (strictly) cyclically monotone rows and
\begin{equation*}
    c=(c_0,c_1,\ldots,c_{p-1}).
\end{equation*}
be a non--increasing sequence of non-negative numbers. Fix an index $j$, $0\leq j\leq p-1$, and modify $A$ by adding to its row with index $j$ sequence $c$. Denote the resulting
matrix by $B$. Then $B$ is a matrix with (strictly) cyclically monotone rows if and only if
 $j=0$ or $j\geq 1$ and
\begin{equation}\label{eq:b-mono}
    c_0 - c_{p-1} \leq a_{j,p-1} - a_{j,0}.
\end{equation}
Indeed, we only have to check the row of $B$ with index $j$. Then $b_{j,k}=a_{j,k}+c_k$.
The situation is obvious for $j=0$. Suppose $j\geq1$. Then surely
\begin{equation*}
    b_{j,j} \geq b_{j,j+1} \geq \ldots \geq b_{j,p-1} \ \text{ and }\
    b_{j,0} \geq b_{j,1} \geq \ldots \geq b_{j,j-1}.
\end{equation*}
Hence, it is sufficient and necessary to guarantee that $b_{j,p-1}\geq b_{j,0}$.
But this is exactly the condition (\ref{eq:b-mono}). Of course, if $a_{j,j}>a_{j,j+1}$
then $b_{j,j}>b_{j,j+1}$.
\end{enumerate}
\end{remark}

\begin{theorem}\label{thm:det}
If a square matrix $A$ of size $p\in\mathbb{N}$ has cyclically monotone rows,
particularly, if condition \eqref{eq:c-monotone} holds, then $\det A\geq 0$.

If, moreover,  $A$ satisfies (\ref{eq:strict-c-monotone-1}) or (\ref{eq:strict-c-monotone}),
that is, if $A$ has strictly cyclically monotone rows, then $\det A >0$.
\end{theorem}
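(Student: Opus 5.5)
The plan is to prove the strict statement first, that $A$ is regular, and then get the signs from a connectedness and continuity argument. By Remark~\ref{rem:simpleProperties}(2), the set $\mathcal{S}_p$ of $p\times p$ matrices with strictly cyclically monotone rows is convex. It contains the identity matrix, whose rows $(0,\dots,0,1,0,\dots,0)$ are cyclically non-increasing from the diagonal with $1>0$. If $\det$ never vanishes on $\mathcal{S}_p$, then $\det$ is a continuous nonvanishing function on a connected set with $\det I=1$, so $\det A>0$ on all of $\mathcal{S}_p$. For a merely cyclically monotone $A$, the matrix $A+\varepsilon I$ lies in $\mathcal{S}_p$ for every $\varepsilon>0$, again by Remark~\ref{rem:simpleProperties}(2). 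Letting $\varepsilon\to0^+$ gives $\det A\ge0$. The case $p=1$ is trivial, so I assume $p\ge2$.

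The core step is to show that $Ax=0$ with $x\in\mathbb{R}^p$ forces $x=0$ when $A\in\mathcal{S}_p$; a real kernel suffices since $A$ is real. I rewrite each row by Abel summation. Put
\[
d_{j,k}:=a_{j,j+k}-a_{j,j+k+1}\ (0\le k<p-1),\qquad d_{j,p-1}:=a_{j,j+p-1}.
\]
All these are $\ge0$, and $d_{j,0}>0$ by \eqref{eq:strict-c-monotone}. Extend the partial sums $P_i:=x_0+\dots+x_{i-1}$ to all integers by $P_{i+p}=P_i+T$, where $T:=P_p=\sum x_i$. Then row $j$ of $Ax=0$ reads
\[
0=\sum_{k=0}^{p-1} d_{j,k}\,\bigl(P_{j+k+1}-P_j\bigr),
\]
with ordinary integer indices here, not reduced mod $p$.

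Now suppose $T\ge0$; the case $T\le 0$ follows by replacing $x$ with $-x$. Choose $j\in\{0,\dots,p-1\}$ minimizing $P_j$ over one period. Then $P_i\ge P_j$ for all $i\in(j,j+p]$: for $i\le p-1$ this is minimality, and for $i\ge p$ we have $P_i=P_{i-p}+T\ge P_j$. So every term in the row-$j$ identity is $\ge0$, hence every term is $0$. Since $d_{j,0}>0$, this gives $x_j=P_{j+1}-P_j=0$, so index $j+1$ (mod $p$) is again a minimizer; when $j=p-1$, this uses $P_p=P_0+T$ and $T\ge0$, $P_p=P_{p-1}$. Iterating around the cycle shows $x_i=0$ for all $i$.

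The delicate point is the wrap-around step in the iteration. When passing from index $p-1$ to $0$ we get $P_0+T=P_{p-1}=\min$, which with $T\ge0$ forces $T=0$ and keeps $P_0$ minimal, so the induction continues. I expect checking this bookkeeping of the extended partial sums to be the main obstacle; the rest is routine.
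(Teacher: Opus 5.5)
Your proof is correct, and it takes a genuinely different route from the paper.

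The paper argues by induction on $p$. It first shows that $\det A^{(\ell)}(x)$, in which the first $\ell+1$ entries of row $0$ are replaced by $x$, is non-decreasing in $x$. The reason is that, after column operations, the coefficient of $x$ is the determinant of a $(p-1)\times(p-1)$ matrix which, by Remark~\ref{rem:simpleProperties}(4), still has (strictly) cyclically monotone rows. Chaining these inequalities shows that replacing row $0$ by its minimum $a_{0,p-1}$ does not increase the determinant, and strictly decreases it in the strict case. Doing this twice, after a cyclic relabelling, gives a matrix with two constant rows and determinant $0$.

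You avoid induction entirely. Abel summation writes row $j$ of $Ax$ as a non-negative combination of the increments $P_{j+k+1}-P_j$ of the quasi-periodically extended partial sums. At a minimizing index all these increments are non-negative, so $d_{j,0}>0$ forces $x_j=0$, and the minimum propagates around the cycle; this is a discrete minimum principle. The index bookkeeping is sound, including the wrap-around step where $P_p=P_0+T$ forces $T=0$. The sign then follows from the convexity of $\mathcal{S}_p$, $\det I=1$, and the perturbation $A+\varepsilon I$.

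Your argument is shorter and shows directly why no kernel vector can exist, which is exactly what Theorem~\ref{thm:cyclic-mono-injective} uses. The paper's argument is purely algebraic and treats the strict and non-strict cases in one pass, with no continuity or connectedness step. It also yields the finer inequality $\det A\ge\det B$, where $B$ is $A$ with its first row replaced by its minimum.
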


\begin{proof}
We proceed by induction on the size of $A$. The case $p=1$ is trivial.


\medskip

Let $p\geq 2$. For an arbitrary but fixed  $\ell\in \{0,1,\ldots,p-2\}$, we consider an array
$A^{(\ell)}(x)=\big(a_{j,k}^{(\ell)}(x)\big)_{0\leq j,k<p}$ of matrix-valued functions of
a real variable $x$ defined by
\begin{equation}\label{eq:ajk-x}
    a_{j,k}^{(\ell)}(x) :=
    \begin{cases}
    x & \text{if}\ \,j=0,\,0\leq k\leq\ell,\\
    a_{j,k} & \text{otherwise}.
    \end{cases}
\end{equation}
Using the induction hypothesis we first show two auxiliary statements.

\medskip

\noindent {\bf Claim 1.} {\it
   The  function $\det A^{(\ell)}(x)$ of real variable $x$ is non--decreasing. If, moreover,
   $A$ satisfies \eqref{eq:strict-c-monotone} then the function is strictly increasing.
}

\medskip

To demonstrate the claim,  we realize that
$\det A^{(\ell)}(x)$ is a linear function of $x$, say  $\det A^{(\ell)}(x) = \alpha x+\beta$.
Let us show that coefficient $\alpha$ standing at $x$ is non-negative, respectively positive if \eqref{eq:strict-c-monotone} takes place.

Modify matrix \textbf{$A^{(\ell)}(x)$}
as follows. Subtract the first column from the columns with indices $k=1,\ldots,\ell$,
and add to the same columns the last column. Of course, this modification
does not influence the determinant. Let us denote the resulting matrix as
\begin{equation*}
 \tilde{A}^{(\ell)}(x)=\big(\tilde{a}_{j,k}^{(\ell)}(x)\big)_{0\leq j,k<p}
\end{equation*}
(in particular, for $\ell=0$ there is no modification at all and so
$\tilde{A}^{(0)}(x)=A^{(0)}(x)$). Then $\tilde{a}_{0,0}^{(\ell)}(x)=x$
and $x$ does not occur in any other entry of $\tilde{A}^{(\ell)}(x)$.

Hence the sought coefficient $\alpha$ equals the  minor of
matrix $\tilde{A}^{(\ell)}(x)$ corresponding to indices $(0,0)$, that is
$\alpha$ equals the determinant of the principal submatrix $\tilde{M}$ of
$\tilde{A}^{(\ell)}(x)$
obtained by removing the first row and column. Let us denote by $M$ the submatrix of $A$ also corresponding to indices $(0,0)$. We enumerate entries of both $M$ and $\tilde{M}$ by indices from
the range ${0,1,\dots,p-2}$. As observed in Remark \ref{rem:simpleProperties} ad(3),
$M$ has (strictly) cyclically monotone rows.

By construction of $\tilde{A}^{(\ell)}(x)$, for a fixed row index $j$,
$0\leq j\leq p-2$,
\begin{equation*}
    \tilde{m}_{j,k} = m_{j,k} + c_{k},\ \, k=0,1,\ldots,p-2,
\end{equation*}
where $m_{j,k}=a_{j+1,k+1}$ and
\begin{equation*}
    c_{k} =
    \begin{cases}
    a_{j+1, p-1}-a_{j+1, 0} & \text{if}\ \,0\leq k\leq \ell-1,\\
\noalign{\smallskip}
    0 & \text{ if }  \ell \leq k \leq p-2.
    \end{cases}
\end{equation*}
In particular, if $\ell=0$ then $\tilde{M}=M$ is a matrix with (strictly) cyclically
monotone rows.
Suppose $\ell\geq1$. We observe that $(c_0, c_1, \ldots, c_{p-2})$ is a non--increasing
sequence of non-negative numbers and, if $j\geq1$,
\begin{equation*}
    c_0 - c_{p-2} = a_{j+1, p-1}-a_{j+1, 0} \leq
    a_{j+1,p-1} - a_{j+1,1} =  m_{j,p-2} - m_{j,0}.
\end{equation*}
Thus we can apply Remark \ref{rem:simpleProperties} ad(4), repeatedly for each row index $j$,
to conclude that $\tilde{M}$ has (strictly) cyclically monotone rows.

We have found that, in any case, $\tilde{M}$ is a matrix with (strictly) cyclically
monotone rows. Therefore, by the induction hypothesis, $\alpha\geq0$ (respectively, $\alpha>0$). Claim 1 is proved.

\medskip

\noindent {\bf Claim 2. }  {\it
   Let $B=(b_{j,k})_{0\leq j,k<p}$ be the matrix defined by
   \begin{equation}\label{eq:def-B}
      b_{j,k} :=
      \begin{cases}
        a_{0,p-1} & \text{if}\ \,j=0,\\
        a_{j,k} & \text{otherwise}.
      \end{cases}
\end{equation}
Then $B$ has cyclically monotone rows and  $\det A\geq \det B$.
If $A$ has strictly cyclically monotone rows then $\det A >\det B$.
}

\medskip

To prove this claim we exploit Claim 1 saying that $\det A^{(\ell)}(x)$ is non--decreasing function
in the real variable $x$ for every $\ell$, $0\leq\ell\leq p-2$. Hence,
\[
\det A^{(\ell)}(a_{0,\ell}) \geq \det A^{(\ell)}(a_{0,\ell+1}).
\]
Moreover, it is clear from definition (\ref{eq:ajk-x}) that, for each $\ell$, $0\leq\ell\leq p-3$,
\[
A^{(\ell)}(a_{0,\ell+1})=A^{(\ell+1)}(a_{0,\ell+1}).
\]
Thus, for $0\leq\ell\leq p-3$,
\[
\det A^{(\ell)}(a_{0,\ell})\geq\det A^{(\ell+1)}(a_{0,\ell+1}).
\]
Furthermore, it is also clear that $A=A^{(0)}(a_{0,0})$ and $A^{(p-2)}(a_{0,p-1})=B$.
It follows that
\[
\det A =\det A^{(0)}(a_{0,0})\geq\det A^{(p-2)}(a_{0,p-2})\geq\det A^{(p-2)}(a_{0,p-1})=\det B.
\]

If $A$ satisfies even condition (\ref{eq:strict-c-monotone}), in particular if $a_{0,0} > a_{0,1}$, then
\[
\det A^{(0)}(a_{0,0})>\det A^{(0)}(a_{0,1}),
\]
and so $\det A >\det B$. The proof of Claim 2 is now complete.

\medskip

Under the  induction hypothesis we showed  that the determinant of any matrix $A$
with cyclically monotone rows is not smaller than the determinant of the matrix
obtained from A by replacing all entries in the first row by the smallest element in
the row. Obviously, the resulting matrix $B$ has again cyclically monotone rows.

\medskip

To complete the proof of the theorem, denote by $B'$ the matrix obtained from  $B$ by
the simultaneous cyclic permutation of rows and columns  described  in \eqref{eq:cyclic-permute}.
As stated in Item 1 of Remark \ref{rem:simpleProperties}, $B'$ has cyclically monotone
rows and $\det B = \det B'$.
Denote by  $B''$ the matrix obtained from $B'$ by replacing all entries in the first row
by the smallest element in the row. Then $B''$ has two constant rows, the first one and
the last one, and thus $\det B'' = 0$.

Claim $2$ applied to $A$ and $B'$ gives
\begin{equation*}
\det A \geq \det B = \det B ' \geq \det B'' = 0,
\end{equation*}
where the first inequality is strict if $A$ has strictly cyclically monotone rows.
\end{proof}

Recall that a matrix of the form
\begin{equation}\label{eq:circular}
C =
\left(\begin{array}{ccccc} c_0&c_1&\cdots & c_{n-2}&c_{n-1}\\
    c_{n-1}&c_{0}&\cdots &c_{n-3}&c_{n-2}\\
    \vdots &&&&\vdots\\
   c_{1}&c_2&\cdots&c_{n-1}&c_0
\end{array}\right)
\end{equation}
is called {\em circulant}. Let us also mention that, among other well known facts concerning
circulant matrices, an explicit formula exists for its determinant
(see, for example, Chapter 5 in \cite{Zhang2011}),
\begin{equation*}
\det C = \prod_{k=0}^{n-1} (c_0+c_1\omega_k+c_2\omega^{\,2}_k+\cdots +c_{n-1}\omega^{\,n-1}_k),
\quad \text{where } \omega_k = \exp(2\pi ik/n).
\end{equation*}

Regarding circulant matrices, Theorem \ref{thm:det} has a direct corollary if we assume
that the first row of a circulant matrix represents a monotone sequence. More
precisely, sticking to the notation of equation (\ref{eq:circular}), suppose that
\begin{equation}\label{eq:c-mono}
    c_0 \geq c_1 \geq c_2\geq \cdots \geq c_{n-1}\geq 0.
\end{equation}

\begin{corollary}
Let the entries of a circulant matrix $C$ defined in (\ref{eq:circular}) satisfy (\ref{eq:c-mono}).
Then  $\det C\geq0$. If, in addition, $c_0>c_1$ then $C$ is regular.
\end{corollary}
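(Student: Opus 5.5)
The plan is to show that the circulant matrix $C$ of (\ref{eq:circular}) has cyclically monotone rows in the sense of (\ref{eq:c-monotone}), and then read off both assertions directly from Theorem \ref{thm:det}. Index the rows and columns of $C$ by $\mathbb{Z}_n$ (with $n$ playing the role of $p$). The entries of a circulant matrix are $c_{j,k}=c_{k-j}$, where the subscript $k-j$ is taken modulo $n$. Consequently, for every $j\in\mathbb{Z}_n$ and every $k\in\mathbb{Z}_n$,
\[
c_{j,j+k}=c_k .
\]
Thus row $j$, read cyclically starting from the diagonal entry, is exactly the sequence $c_0,c_1,\ldots,c_{n-1}$.

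With this observation, condition (\ref{eq:c-monotone}) for $C$ reads $c_k\geq c_{k+1}$ for $k=0,1,\ldots,n-2$, and non-negativity of the entries reads $c_{n-1}\geq 0$. Both are precisely the hypothesis (\ref{eq:c-mono}). So $C$ has cyclically monotone rows, and the first part of Theorem \ref{thm:det} gives $\det C\geq 0$.

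For the second assertion, suppose in addition $c_0>c_1$. There are two cases.
\begin{itemize}
\item If $n\geq 2$, then $c_{j,j}=c_0>c_1=c_{j,j+1}$ for every $j$, which is condition (\ref{eq:strict-c-monotone}).
\item If $n=1$, the term $c_1$ does not exist. The intended reading is then $c_0>0$, which is (\ref{eq:strict-c-monotone-1}). Alternatively, one can note that the strict inequality $c_0>c_1\geq 0$, interpreted with the cyclic convention $c_1=c_0$ or as a vacuous statement, degenerates; this edge case only needs a remark.
\end{itemize}
In either case $C$ has strictly cyclically monotone rows, so Theorem \ref{thm:det} yields $\det C>0$, and $C$ is regular.

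There is no real obstacle here. The only point requiring care is the index bookkeeping: one must check that the modular convention of the paper's Notation makes the circulant pattern $c_{j,k}=c_{k-j}$ coincide with the cyclic reading of each row starting at its diagonal entry, and one must handle the trivial $n=1$ case separately.
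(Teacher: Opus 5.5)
Your proposal is correct and is the argument the paper intends. The paper gives no separate proof: it presents the statement as a direct corollary of Theorem \ref{thm:det}, and the justification is exactly your observation that $c_{j,j+k}=c_k$, so that (\ref{eq:c-mono}) is condition (\ref{eq:c-monotone}) and $c_0>c_1$ is condition (\ref{eq:strict-c-monotone}). The $n=1$ case is a harmless edge case under either of your readings, though your wording of that bullet could be tightened.
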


More is known in this particular case.

\begin{theorem}[\cite{KrSi2012}]\label{thm:regCircMono}
    Let the entries of a circulant matrix $C$ defined in (\ref{eq:circular}) satisfy (\ref{eq:c-mono}).
    Then $C$ is singular if and only if $n=kd$ where $k,d\in\mathbb{N}$, $d\geq2$, and
    the vector $(c_0,c_1,\ldots,c_{n-1})$ consists of $k$ consecutive constant blocks of length $d$.
\end{theorem}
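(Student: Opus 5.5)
The plan is to use the explicit eigenvalue formula for circulant determinants recalled above. With $f(z):=c_0+c_1z+\cdots+c_{n-1}z^{n-1}$ we have $\det C=\prod_{k=0}^{n-1}f(\omega_k)$. So $C$ is singular if and only if $f(\omega)=0$ for some $n$-th root of unity $\omega$, and the task is to decide exactly when this can happen under the monotonicity assumption (\ref{eq:c-mono}). The whole argument reduces to an Abel summation (summation by parts) followed by a real-part positivity argument.

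First, the root $\omega=1$. Here $f(1)=\sum_j c_j$, which vanishes only when all $c_j=0$. The zero vector is trivially of the block form with $k=1$, $d=n$, provided $n\geq 2$. The degenerate case $n=1$, $c_0=0$ has to be set aside or regarded as excluded by the statement.

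Next, fix $\omega\neq 1$ with $\omega^n=1$ and put $\delta_j:=c_{j-1}-c_j\geq 0$ for $1\leq j\leq n-1$. Multiplying by $1-\omega$, telescoping, and using $\omega^n=1$ together with $c_0-c_{n-1}=\sum_{j=1}^{n-1}\delta_j$, I expect to obtain
\begin{equation*}
(1-\omega)f(\omega)=\sum_{j=1}^{n-1}\delta_j\,(1-\omega^j).
\end{equation*}
Taking real parts gives $\sum_j\delta_j(1-\operatorname{Re}\omega^j)$. Every term here is non-negative, and a term vanishes iff $\delta_j=0$ or $\omega^j=1$. Hence $f(\omega)=0$ forces $\delta_j=0$ for all $j$ with $\omega^j\neq1$. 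Conversely, this condition makes every summand on the right-hand side vanish, so $f(\omega)=0$.

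To translate this, let $d\geq2$ be the multiplicative order of $\omega$, so that $d\mid n$ and $n=kd$. The condition says that the sequence $c_0,\dots,c_{n-1}$ can only drop at indices $j$ that are multiples of $d$. That is exactly the statement that $(c_0,\dots,c_{n-1})$ consists of $k$ consecutive constant blocks of length $d$. For the converse direction, given such a block structure with $d\geq 2$, I would take $\omega=\exp(2\pi i/d)$, which is an $n$-th root of unity different from $1$. Every drop index $j$ is then a multiple of $d$, so $\omega^j=1$, the right-hand side vanishes, and since $\omega\neq1$ we get $f(\omega)=0$, hence $\det C=0$.

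The only delicate point is getting the summation-by-parts identity right, in particular the use of $\omega^n=1$ to absorb the boundary term $c_{n-1}\omega^n$. The rest is bookkeeping about which indices $j$ satisfy $\omega^j=1$, together with the $\omega=1$ and zero-vector edge cases.
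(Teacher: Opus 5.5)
Your proposal is correct and complete. The identity $(1-\omega)f(\omega)=\sum_{j=1}^{n-1}\delta_j(1-\omega^j)$ checks out: the boundary term $-c_{n-1}\omega^n$ becomes $-c_{n-1}$ and combines with $c_0$ into $\sum_j\delta_j$. The real-part positivity argument correctly forces $\delta_j=0$ exactly where $\omega^j\neq1$, and the order $d$ of $\omega$ translates precisely into the block structure. The converse via $\omega=\exp(2\pi i/d)$ is also right.

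The paper gives no proof of this statement; it simply quotes it from \cite{KrSi2012}. The only related argument in the paper is the corollary of Theorem~\ref{thm:det}. That corollary is obtained from the induction on matrices with cyclically monotone rows, which treats the determinant as a linear function of the modified first-row entries. It yields $\det C\geq0$, and regularity when $c_0>c_1$.

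The two routes differ in scope. Your spectral route relies on the Fourier diagonalization, which exists only for circulants. In exchange it gives the full ``if and only if'' characterization. It also gives the sign if you want it: the non-real eigenvalues $f(\omega)$ come in conjugate pairs, $f(1)\geq0$, and for even $n$ also $f(-1)=(c_0-c_1)+\cdots+(c_{n-2}-c_{n-1})\geq0$.

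The paper's route needs no eigenvalue formula and applies to arbitrary matrices with (strictly) cyclically monotone rows. For circulants, however, it only recovers the sufficient condition $c_0>c_1$. That condition is strictly weaker than the quoted theorem: for example, $n=3$ with $c_0=c_1>c_2$ is regular by the theorem but is not covered by Theorem~\ref{thm:det}.

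You were right to flag the case $n=1$, $c_0=0$. There the $1\times1$ zero matrix is singular, yet no $d\geq2$ divides $n$. So the statement as quoted needs $n\geq2$ (or $C\neq0$) to be literally true.
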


We just note that the easy corollary following from Theorem \ref{thm:det} is in agreement
with Theorem \ref{thm:regCircMono}.

\section{Alternate power series}\label{sec:alternatePowerSeries}

\begin{definition}
Let $p$ be a positive integer. A mapping  $f:[0,1)^p \to \mathbb{R}$ is said to be
{\it alternate power series of  type $p$} if
for every $\vec{y}=(y_0,y_1, \ldots, y_{p-1})^T\in [0,1)^p$
\begin{equation}\label{eq:def-alternate-power-series}
f(\vec{y})= f(y_0, y_1, \ldots, y_{p-1})
= \sum_{n=0}^{+\infty} a_n\prod_{i=0}^{n}y_{i\!\!\!\mod p}
\end{equation}
where $(a_n)_{n\geq 0}$ is a  bounded sequence  of non-negative numbers such that $a_0>0$ and
$a_n > 0$ for infinitely many indices $n$.

\end{definition}
Obviously, $f$ is differentiable in any point $\vec{y}\in [0,1)^p$.

\begin{lemma}\label{lem:TvarFunkce}
Let $f:[0,1)^p \to \mathbb{R}$ be  an alternate power series of  type $p \in \mathbb{N}$.
Then for all $\vec{y}=(y_0,y_1, \ldots, y_{p-1})^T\in (0,1)^p$ and
$j\in \{0,1,\ldots, p-1\}$ the following inequalities take place
\begin{equation*}
y_j\,\frac{\partial f(\vec{y})}{\partial y_j}  \geq 0
\qquad \text{and }\qquad
y_j\,\frac{\partial f(\vec{y})}{\partial y_j}
- y_{j+1}\,\frac{\partial f(\vec{y})}{\partial y_{j+1}}
\ \ \left\{\begin{array}{ll}
    > 0  & \text{ if } j=0;\\
    &\\
    \geq 0  & \text{ if } j>0.
\end{array}\right.
\end{equation*}
\end{lemma}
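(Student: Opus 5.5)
The plan is to compute $y_j\,\partial f/\partial y_j$ term by term and compare the results for consecutive indices. Write $P_n(\vec y)=\prod_{i=0}^{n}y_{i\bmod p}$. Each $P_n$ is a monomial, and $y_j$ appears in it with exponent $e_j(n)=\#\{i\in\{0,\ldots,n\}: i\equiv j \pmod p\}$. Since $y_j\,\partial P_n/\partial y_j=e_j(n)P_n$, we obtain
\begin{equation*}
y_j\,\frac{\partial f(\vec y)}{\partial y_j}=\sum_{n=0}^{+\infty} a_n\, e_j(n)\, P_n(\vec y).
\end{equation*}
Termwise differentiation is justified because $(a_n)$ is bounded and $0\le P_n\le \max_i y_i^{\lfloor (n+1)/p\rfloor}$ on compact subsets of $[0,1)^p$. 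Hence the series and its termwise derivatives converge locally uniformly (the factor $e_j(n)\le n+1$ does not spoil geometric decay). With all $a_n\ge0$, $e_j(n)\ge0$ and $P_n\ge0$, the first inequality is immediate.

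For the second inequality we have
\begin{equation*}
y_j\,\frac{\partial f}{\partial y_j}-y_{j+1}\,\frac{\partial f}{\partial y_{j+1}}=\sum_{n\ge0}a_n\big(e_j(n)-e_{j+1}(n)\big)P_n(\vec y),
\end{equation*}
where $j+1$ is taken mod $p$. So the key combinatorial step is to understand $e_j(n)-e_{j+1}(n)$. Write $n+1=qp+r$ with $0\le r<p$. Then $e_k(n)=q+[k<r]$, so $e_j(n)-e_{j+1}(n)=[j<r]-[j+1<r]$, reading $j+1$ mod $p$.

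For $0\le j\le p-2$ this difference is $[r=j+1]\ge 0$. For $j=p-1$ we have $j+1\equiv 0$, and the difference is $[p-1<r]-[0<r]=-[r>0]\le 0$. This is the wrong sign, so the case $j=p-1$ must be absent from the claim. Indeed, the cyclic addition convention would make $j=p-1$ compare $y_{p-1}$ with $y_0$, and that difference is $\le0$, consistent with the strict inequality being asserted only at $j=0$. I therefore read the statement for $j\in\{0,\ldots,p-2\}$ when $p\ge2$; for $p=1$ only the first inequality, together with positivity $a_0P_0=a_0y_0>0$, is meaningful. This indexing subtlety is the main point to handle carefully. Everything else is routine.

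For strictness at $j=0$ with $p\ge2$: the coefficient of the $n=0$ term is $e_0(0)-e_1(0)=1-0=1$. Every other term is non-negative, so the sum is at least $a_0y_0>0$ because $a_0>0$ and $y_0>0$ on $(0,1)^p$. For $j>0$ all terms are non-negative, which gives $\ge 0$. The hypothesis that $a_n>0$ for infinitely many $n$ is not needed here; it is presumably used later.
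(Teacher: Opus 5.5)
Your proof is correct and follows the paper's argument: both compute $y_j\,\partial_{y_j}$ monomial by monomial, observe that the exponent of $y_j$ in $\prod_{i=0}^{n}y_{i \bmod p}$ is non-increasing in $j$ with a strict drop from $j=0$ to $j=1$ at $n=0$, and conclude from $a_n\ge 0$ and $a_0>0$. Your remark that the second inequality can only hold for $0\le j\le p-2$ is also correct: under the cyclic convention it fails at $j=p-1$, and for $p=1$ only the positivity $y_0f'(y_0)\ge a_0y_0>0$ is meaningful. This restricted form is exactly what the paper's exponent argument yields and what Lemma~\ref{lem:DerivativeCyclic} requires.
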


\begin{proof}
For a non-negative integer $n$ let us write $n=kp+r$, with $k$ and $r$ being non-negative integers, $r<p$.
Then
\begin{equation*}
    \prod_{i=0}^{n}y_{i\!\!\!\!\mod p}
    = \prod_{m=0}^{p-1}y_m^{\,\, \sigma_m},\ \text{where}\ \sigma_m
    = \begin{cases}
       k+1 & \text{if}\ \,0\leq j\leq r,\\
       k & \text{if}\ \,r< j\leq p-1.
       \end{cases}.
\end{equation*}
For the purposes of this proof it suffices to notice that, for any given $n$,
the respective sequence of powers $\{\sigma_0,\sigma_1,\ldots,\sigma_{p-1}\}$ is
non-increasing and, in addition, $\sigma_0>\sigma_1$ if $n=0$.
%
%
Since
\begin{equation*}
y_j\,\frac{\partial }{\partial y_j}\,\prod_{m = 0}^{p-1} y_m^{\,\,\sigma_m}
= \sigma_j\,\prod_{m = 0}^{p-1}y_m^{\,\,\sigma_m},
\end{equation*}
all $y_m$ are positive, all $a_n$ are non-negative, and $a_0 >0$, the lemma follows.
\end{proof}

\begin{definition}\label{def:PSI}
Let $f^{(0)}, f^{(1)},  \ldots, f^{(p-1)} $ be a $p$-tuple of alternate power series
of type $p\in\mathbb{N}$.
For $i \in \{0, 1, \ldots, p-1\}$, we define  $\psi_i: [0,1)^p\to\mathbb{R}$  as follows:
\begin{equation}\label{eq:def-psi}
\psi_i(\vec{y}) = f^{(i)}(y_i,y_{i+1}, \ldots, y_{i+p-1})
\text{ \quad for every  $\vec{y} = (y_0, y_1, \ldots, y_{p-1}) \in [0, 1)^p$ }.
\end{equation}
The mapping   $\Psi=(\psi_0, \psi_1, \ldots, \psi_{p-1})^T: [0,1)^p \to \mathbb{R}^p$
is called {\it the vector-valued function associated with  $f^{(0)}, f^{(1)},  \ldots, f^{(p-1)} $}.
\end{definition}

\begin{lemma}\label{lem:DerivativeCyclic}
Let $\Psi$  be as in Definition \ref{def:PSI}.
Then the Jacobian matrix $J_{\Psi}(\vec{y})$ of $\Psi$ multiplied from the right
by the diagonal matrix
$\diag(y_0, y_1, \ldots, y_{p-1})$ has strictly cyclically monotone rows
at  every $\vec{y} \in (0,1)^p$.
\end{lemma}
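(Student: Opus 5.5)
The matrix in question is $D(\vec y):=J_{\Psi}(\vec y)\,\diag(y_0,\ldots,y_{p-1})$. Its entries are
\[
D_{i,k}=y_k\,\frac{\partial\psi_i(\vec y)}{\partial y_k}.
\]
The plan is to express each row of $D$ through Lemma \ref{lem:TvarFunkce} applied to $f^{(i)}$ at a cyclically shifted point. Then the row conditions \eqref{eq:c-monotone} and \eqref{eq:strict-c-monotone} become exactly the inequalities of that lemma, read with the index shifted by $i$.

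First fix $i\in\mathbb{Z}_p$ and put $\vec z=(z_0,\ldots,z_{p-1})$ with $z_m:=y_{i+m}$, indices taken mod $p$ as in the paper's convention. Then $\vec z\in(0,1)^p$ whenever $\vec y\in(0,1)^p$, and $\psi_i(\vec y)=f^{(i)}(\vec z)$. Since $y_k$ enters only through the single argument $z_m$ with $m=k-i$, the chain rule gives
\[
y_k\,\frac{\partial\psi_i}{\partial y_k}(\vec y)=z_m\,\frac{\partial f^{(i)}}{\partial z_m}(\vec z),\qquad k=i+m .
\]
So the $i$-th row of $D$, read cyclically starting from the diagonal entry $(i,i)$, is the sequence $\bigl(z_m\,\partial f^{(i)}/\partial z_m(\vec z)\bigr)_{m=0}^{p-1}$.

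Next verify the conditions. Non-negativity of all entries is the first inequality of Lemma \ref{lem:TvarFunkce}. Condition \eqref{eq:c-monotone} asks that $D_{i,i+m}\ge D_{i,i+m+1}$ for $m=0,\ldots,p-2$. In $\vec z$-coordinates this is $z_m\partial_m f^{(i)}-z_{m+1}\partial_{m+1}f^{(i)}\ge0$, which is the second inequality of the lemma with $j=m$. For $p\ge2$, the strict condition \eqref{eq:strict-c-monotone} at row $i$ is the case $m=0$, where the lemma gives strict inequality. For $p=1$ we need $D_{0,0}=y_0f'(y_0)>0$, and this follows directly because $a_0>0$ contributes $a_0y_0>0$ while all other terms are non-negative.

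The only delicate point is the index bookkeeping. The lemma's second inequality with $j=p-1$ would compare $z_{p-1}$ with $z_0$ through the wrap-around $j+1$, but we never use it: \eqref{eq:c-monotone} excludes $k=p-1$. I would also check that the cyclic shift $\vec z$ starts at $y_i$, matching \eqref{eq:def-psi}, so that the diagonal entry corresponds to $m=0$ and therefore receives the strict inequality. Finally, differentiating the series term by term is justified because $(a_n)$ is bounded and $\vec y\in(0,1)^p$, which gives geometric convergence of the differentiated series.
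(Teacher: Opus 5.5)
Your proof is correct and follows the paper's argument. Like the paper, you apply Lemma \ref{lem:TvarFunkce} to $f^{(i)}$ at the cyclically shifted point $(y_i,\ldots,y_{i+p-1})$, so that row $i$ of $J_\Psi\,\diag(y_0,\ldots,y_{p-1})$, read from the diagonal, is exactly that lemma's chain of inequalities; your separate treatment of $p=1$ via $a_0y_0>0$ is a useful addition, since for $p=1$ the lemma's strict inequality with the wrap-around $j+1=0$ degenerates and the paper leaves that case implicit.
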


\begin{proof}
Since  $\psi_i(\vec{y}) = f^{(i)}(y_i, y_{i+1}, \ldots, y_{i+p-1})$, Lemma \ref{lem:TvarFunkce} implies
\begin{equation}
y_i\,\frac{\partial \psi_i(\vec{y})}{\partial y_i}> y_{i+1}\,
\frac{\partial \psi_i(\vec{y})}{\partial y_{i+1}}\geq \cdots \geq
y_{i+p-1}\,\frac{\partial \psi_i(\vec{y})}{\partial y_{i+p-1}} \geq 0.
\end{equation}
This means exactly that the product of matrices $J_{\Psi}(\vec{y})$ and
$\diag(y_0, y_1, \ldots, y_{p-1})$ is a matrix with strictly cyclically monotone rows.
\end{proof}

\begin{corollary}\label{col:positiveMinor}
Let $\Psi$ be as in Definition \ref{def:PSI}, and $\vec{y} \in (0,1)^p$.
Then every principal minor of the Jacobian matrix $J_{\Psi}(\vec{y})$ is positive.
\end{corollary}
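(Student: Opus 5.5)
The plan is to reduce the claim to Theorem \ref{thm:det} via Lemma \ref{lem:DerivativeCyclic} and Remark \ref{rem:simpleProperties}, item 3. Fix $\vec{y}\in(0,1)^p$ and put $D=\diag(y_0,\ldots,y_{p-1})$ and $A=J_{\Psi}(\vec{y})\,D$. By Lemma \ref{lem:DerivativeCyclic}, $A$ has strictly cyclically monotone rows.

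Now take any nonempty index set $K\subseteq\{0,\ldots,p-1\}$, which is the complement of the removed set $I$. Let $J_K$ and $A_K$ denote the principal submatrices of $J_{\Psi}(\vec{y})$ and $A$ on the rows and columns indexed by $K$. The columns of $A$ are those of $J_\Psi(\vec y)$ scaled by the $y_k$, so
\begin{equation*}
A_K = J_K\,\diag(y_k)_{k\in K},
\qquad
\det A_K=\det J_K\cdot\prod_{k\in K}y_k .
\end{equation*}
The product $\prod_{k\in K}y_k$ is strictly positive because every $y_k>0$. It therefore suffices to show $\det A_K>0$.

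By Remark \ref{rem:simpleProperties}, item 3, the principal submatrix $A_K$ again has strictly cyclically monotone rows. Here the indices in $K$ are relabelled increasingly as $0,\ldots,|K|-1$. The case $K$ equal to the full index set is just $A$ itself. Theorem \ref{thm:det} then gives $\det A_K>0$, and hence $\det J_K>0$.

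The step that needs care is whether item 3 of the Remark really keeps the strict condition after deletion, in particular when the new size is $1$. In that case we need $a_{j,j}>0$. This holds because $a_{j,j}>a_{j,j+1}\geq0$ in the original matrix, or directly because $p=1$. For larger $K$, consider the element that follows $j$ cyclically within $K$. Its entry in row $j$ is at most $a_{j,j+1}<a_{j,j}$, by cyclic monotonicity in the original row. So strictness is preserved, and the remaining entries stay cyclically non-increasing since we only skip elements of a cyclically monotone sequence.
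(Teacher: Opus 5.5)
Your proposal is correct and follows essentially the same route as the paper: write $C=J_\Psi(\vec y)D$, note that a principal submatrix factors as $C'=J'D'$, use Remark~\ref{rem:simpleProperties} item 3 and Theorem~\ref{thm:det} to get $\det C'>0$, and divide by $\det D'>0$. Your extra check that strictness survives deletion, including the size-one case, fills in a detail the paper only asserts in the remark.
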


\begin{proof}
For ease of notation, in this proof we put $J:=J_{\Psi}(\vec{y})$,
$D:=\diag(y_0, y_1, \ldots, y_{p-1})$ and $C:=JD$. Let $I\subset\{0,1,\ldots,p-1\}$
be an arbitrary proper subset and $J'$ be the principal submatrix of $J$
corresponding to $I$, as described in Remark \ref{rem:simpleProperties} ad(3).
Similarly, $D'$ is the principal submatrix of $D$ corresponding to $I$, and $C'$
is the principal submatrix of $C$ corresponding to $I$. Because $D$
is diagonal, we clearly have $C':=J'D'$.

Lemma \ref{lem:DerivativeCyclic} guarantees that $C$ has strictly cyclically monotone
rows and, as observed in Remark \ref{rem:simpleProperties} ad(3), the same is true
for every principal submatrix of $C$.
Therefore, by Theorem \ref{thm:det}, $\det(C')>0$. As all components of $\vec{y}$ are
positive, $\det(D')>0$. Consequently, $\det(J')>0$.
\end{proof}

The paper \cite{GaNi1965} by  Gale and Nikaido is devoted to  mappings whose Jacobian
matrix has the property described in Corollary \ref{col:positiveMinor}.
Let us quote their main result.

\begin{theorem}[\cite{GaNi1965}]\label{thm:GaleNikaido}
Let $\Phi: \Omega\to\mathbb{R}^p$ be a differentiable mapping on  a closed rectangular
region of $\mathbb{R}^p$. If every leading principal minor of the Jacobian matrix
$J_{\Phi}(\vec{y})$  is positive at every $\vec{y} \in \Omega$
then $\Phi$ is injective on $\Omega$.
\end{theorem}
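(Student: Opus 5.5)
As worded, with only the \emph{leading} principal minors assumed positive, the statement cannot be proved, because it is false. My plan is therefore to exhibit an explicit counterexample, and then to indicate the version that is actually true and needed later.

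For the counterexample I take $p=2$ and the map (going back to Gale and Nikaido themselves)
\[
\Phi(x,y)=\bigl(e^{2x}-y^{2}+3,\; 4ye^{2x}-y^{3}\bigr)
\]
on the closed rectangle $\Omega=[-1,1]\times[-3,3]$. Its Jacobian matrix is
\[
J_\Phi(x,y)=\left(\begin{matrix}2e^{2x} & -2y\\ 8ye^{2x} & 4e^{2x}-3y^{2}\end{matrix}\right).
\]
The first leading principal minor is $2e^{2x}>0$. The second one is
\[
\det J_\Phi=8e^{4x}-6y^2e^{2x}+16y^2e^{2x}=8e^{4x}+10y^{2}e^{2x}>0 .
\]
So both leading principal minors are positive at every point of $\Omega$. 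On the other hand, $\Phi(0,2)=(1-4+3,\,8-8)=(0,0)$ and $\Phi(0,-2)=(0,\,-8+8)=(0,0)$. Thus $\Phi$ is not injective on $\Omega$, which refutes the statement as literally phrased.

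The failure is explained by the non-leading principal minor $4e^{2x}-3y^{2}$, which is negative, for instance, at $(0,\pm2)$. This shows that the hypothesis must control \emph{all} principal minors. The correct Gale--Nikaido theorem is the following: if $J_\Phi(\vec y)$ is a P-matrix (every principal minor positive) at every point of a closed rectangle, then $\Phi$ is injective there.

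That stronger hypothesis is exactly what Corollary~\ref{col:positiveMinor} supplies for $\Psi$ on $(0,1)^p$. In the subsequent application I would therefore quote the theorem with ``every principal minor'' in place of ``every leading principal minor''. I would then apply it on closed rectangles $[\varepsilon,1-\varepsilon]^p$, and let $\varepsilon\to0$ to obtain injectivity of $\Psi$ on $(0,1)^p$. This last step is routine, since any two distinct points of $(0,1)^p$ lie in a common such rectangle.
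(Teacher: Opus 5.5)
Your counterexample is correct, so the statement as printed is false. For $\Phi(x,y)=(e^{2x}-y^2+3,\,4ye^{2x}-y^3)$ on $[-1,1]\times[-3,3]$, the leading principal minors are $2e^{2x}$ and $8e^{4x}+10y^2e^{2x}$, both positive everywhere. Yet $\Phi(0,2)=\Phi(0,-2)=(0,0)$.

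The paper does not prove this theorem. It only cites Gale and Nikaido, whose univalence theorem on closed rectangles assumes that $J_\Phi$ is a P-matrix, i.e.\ that \emph{every} principal minor is positive. The word ``leading'' in the quoted statement is a misquotation.

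Your explanation of the failure is right: the non-leading minor $4e^{2x}-3y^2$ equals $-8$ at $(0,\pm2)$. Your corrected version is also right, and it costs the paper nothing downstream. Corollary~\ref{col:positiveMinor} already establishes positivity of all principal minors of $J_\Psi$ on $(0,1)^p$. Hence the first proof of Theorem~\ref{thm:hlavni} goes through with the correctly quoted theorem on the rectangles $[\varepsilon,1-\varepsilon]^p$, exactly as you describe. The paper's alternative proof of Theorem~\ref{thm:hlavni}, via McLeod's mean value theorem and Theorem~\ref{thm:det}, does not use Gale--Nikaido at all, so it is unaffected either way.

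Neither you nor the paper proves the corrected P-matrix version; it remains an imported result. That is fine, since the paper treats it the same way.
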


The following theorem is a straightforward consequence  of the previous theorem and Corollary \ref{col:positiveMinor}.

\begin{theorem}\label{thm:hlavni}
The vector-valued function $\Psi$ associated with alternate power series of type~$p$
is injective on $(0,1)^p$.
\end{theorem}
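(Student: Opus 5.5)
The plan is to derive injectivity on the open cube $(0,1)^p$ from Theorem~\ref{thm:GaleNikaido}, which is stated only for closed rectangular regions. Suppose $\Psi(\vec{y})=\Psi(\vec{z})$ for some $\vec{y},\vec{z}\in(0,1)^p$. First I choose $\varepsilon>0$ small enough that both points lie in the closed cube $\Omega_\varepsilon:=[\varepsilon,1-\varepsilon]^p$. This set is a closed rectangular region contained in $(0,1)^p$. It then suffices to show that $\Psi$ is injective on $\Omega_\varepsilon$, since that forces $\vec{y}=\vec{z}$.

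Next I check the hypotheses of Theorem~\ref{thm:GaleNikaido} for $\Phi:=\Psi$ restricted to $\Omega_\varepsilon$.

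\emph{Differentiability.} Each $\psi_i$ is obtained from an alternate power series $f^{(i)}$ by a cyclic permutation of the variables. Since the coefficients are bounded and all $|y_j|\le 1-\varepsilon<1$, the series and its termwise derivatives converge uniformly on a neighbourhood of $\Omega_\varepsilon$ inside $(0,1)^p$. Hence $\Psi$ is continuously differentiable there, in line with the remark after the definition of alternate power series.

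\emph{Minors.} Every leading principal minor of $J_\Psi(\vec{y})$ is in particular a principal minor. The full determinant also counts here: in Corollary~\ref{col:positiveMinor}, taking $I=\emptyset$ (or invoking Lemma~\ref{lem:DerivativeCyclic} together with Theorem~\ref{thm:det} directly) gives $\det J_\Psi(\vec{y})=\det(C)/\prod_j y_j>0$. So Corollary~\ref{col:positiveMinor} shows that all leading principal minors are positive at every $\vec{y}\in\Omega_\varepsilon\subset(0,1)^p$.

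Applying Theorem~\ref{thm:GaleNikaido} then yields injectivity of $\Psi$ on $\Omega_\varepsilon$, and hence $\vec{y}=\vec{z}$.

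The only delicate point is this reduction from the open cube to closed subcubes, because Gale--Nikaido needs the positivity of the minors on a closed region. Working in $[\varepsilon,1-\varepsilon]^p$ avoids the boundary, where $y_j=0$ would break the strictness coming from Lemma~\ref{lem:TvarFunkce} and the factor $D$ would become singular. I would also state explicitly that the Corollary covers the case $I=\emptyset$, i.e.\ the full matrix, since that is what the "leading principal minors" of size $p$ require.
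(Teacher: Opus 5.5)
Your proposal is correct and follows the paper's own proof: restrict to a closed cube $[\varepsilon,1-\varepsilon]^p$, use Corollary~\ref{col:positiveMinor} to supply the minor condition of Theorem~\ref{thm:GaleNikaido}, and let $\varepsilon$ be small. Choosing $\varepsilon$ after fixing the two points and checking the case $I=\emptyset$ only make explicit what the paper's ``$\varepsilon$ can be arbitrarily small'' leaves implicit.
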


\begin{proof}
Choose  $\varepsilon \in (0, \frac12) $  and denote
$\Omega_{\varepsilon} := [\varepsilon, 1-\varepsilon]^p$. Corollary \ref{col:positiveMinor}
and Theorem \ref{thm:GaleNikaido} imply injectivity of $\Psi$ on $\Omega$. As $\varepsilon$
can be arbitrarily small, the theorem is proved.
\end{proof}

Additionally, we can propose an alternative proof of Theorem \ref{thm:hlavni}
that is based on a mean value theorem for vector-valued functions due to McLeod \cite{McLeod1964}
rather than on Theorem \ref{thm:GaleNikaido}.

\begin{theorem}[\cite{McLeod1964}]\label{thm:McLeod}
Let $f:[0, 1] \to\mathbb{R}^p$ be a continuous vector-valued function, and assume that
$f$ is continuously differentiable on $(0, 1)$. Then there exist $p$ points
$\xi_1, \xi_2, \ldots, \xi_p \in (0, 1)$ and $p$ non-negative numbers
$\lambda_1, \lambda_2, \ldots, \lambda_p$ such that
\begin{equation*}
\lambda_1 + \lambda_2 + \cdots + \lambda_p = 1
\end{equation*}
and
\begin{equation*}
f(1) - f(0) = \sum_{k=1}^p \lambda_k f'(\xi_k).
\end{equation*}
\end{theorem}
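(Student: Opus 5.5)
The plan is in two stages. First I would show that the vector $v:=f(1)-f(0)$ lies in the convex hull of the set $C:=\{f'(t):t\in(0,1)\}$. Then I would use the connectedness of $C$ to cut the number of points needed in a convex combination from $p+1$ (Carath\'eodory) down to $p$.

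\textbf{Stage 1: $v\in\operatorname{conv}C$, by induction on $p$.} For $p=1$ this is the ordinary mean value theorem. For $p\geq2$, suppose $v\notin\operatorname{conv}C$. Since $\operatorname{conv}C$ is convex, a separation argument gives a nonzero vector $u$ with $u\cdot c\leq u\cdot v$ for all $c\in C$. Put $h(t):=u\cdot\big(f(t)-tv\big)$.
\begin{itemize}
\item $h$ is continuous on $[0,1]$ and satisfies $h(0)=u\cdot f(0)=h(1)$.
\item On $(0,1)$ we have $h'(t)=u\cdot f'(t)-u\cdot v\leq0$.
\item So $h$ is non-increasing with equal endpoint values, hence constant, and therefore $u\cdot f'(t)=u\cdot v$ for every $t$.
\end{itemize}
Thus $C$ lies in the affine hyperplane $\{x:u\cdot x=u\cdot v\}$. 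Now let $P$ be the orthogonal projection onto $u^{\perp}\cong\mathbb{R}^{p-1}$. Applying the induction hypothesis to $Pf$ gives $Pv\in\operatorname{conv}PC$. Every point of $C$, and $v$ itself, has the same $u$-component, so lifting back yields $v\in\operatorname{conv}C$, a contradiction. (Alternatively, one can avoid the contradiction and simply run the dichotomy: either $v$ is strictly inside, or everything collapses to a hyperplane and one recurses.)

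\textbf{Stage 2: reduction to $p$ points.} Carath\'eodory's theorem expresses $v$ as a convex combination of at most $p+1$ points of $C$. Because $f'$ is continuous on $(0,1)$, the set $C$ is connected, and I would invoke the Fenchel--Bunt refinement: in $\mathbb{R}^p$, every point of the convex hull of a connected set is a convex combination of at most $p$ of its points. If citing it is not allowed, I would prove it directly as follows.
\begin{itemize}
\item Suppose $v$ genuinely requires $p+1$ points $c_0,\ldots,c_p$. Then $v$ lies in the interior of the simplex $S$ they span, and not on any facet.
\item Each facet hyperplane through $p$ of the $c_i$ splits $\mathbb{R}^p\setminus\{\text{hyperplane}\}$ into two sides.
\item Using a cone argument from $v$: the $p+1$ cones from $v$ over the facets of $S$ cover $\mathbb{R}^p$. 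If $C$ meets no set of the form "$v$ plus the negative of a cone over a facet" in a way that yields a $p$-point representation, then the points of $C$ avoid a neighbourhood structure around $v$.
\item That avoidance would disconnect $C$, contradicting connectedness.
\end{itemize}
Finally, writing the $p$ points as $f'(\xi_k)$ gives the $\xi_k$ and the weights $\lambda_k$, repeating points if fewer than $p$ are needed.

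I expect Stage 2, the Fenchel--Bunt reduction from $p+1$ to $p$ points, to be the main obstacle. Stage 1 is a clean separation-plus-scalar-mean-value argument.
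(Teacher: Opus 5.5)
The paper gives no proof of this statement. It is imported from McLeod and used only through Remark~\ref{rem:McLeold-extend} in the proof of Theorem~\ref{thm:cyclic-mono-injective}. Your argument is therefore an independent route, and it is correct.

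\textbf{Stage 1.} Weak separation of a point from a convex set is available in $\mathbb{R}^p$ even when $v$ lies in the closure of $\operatorname{conv}C$ but not in $\operatorname{conv}C$ itself. The scalar mean value theorem applied to $h$ then pins $C$ to the affine hyperplane through $v$. The projection/induction step lifts back correctly, because $v$ and every point of $C$ have the same $u$-component.

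\textbf{Stage 2.} The Fenchel--Bunt theorem applies because $C=f'((0,1))$ is connected, and this is exactly where continuity of $f'$ is used.

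Your fallback sketch of Fenchel--Bunt, however, is too vague to count as a proof. A precise version runs as follows. With $v$ interior to the simplex on $c_0,\dots,c_p$, let $K_i$ be the reflection through $v$ of the cone with apex $v$ over the facet opposite $c_i$. These cones cover $\mathbb{R}^p$ with pairwise disjoint interiors, and $c_i\in\operatorname{int}K_i$. A point of $C$ lying on $\partial K_i$ (or equal to $v$) gives a representation of $v$ by that point together with the $p-1$ vertices of a ridge, i.e.\ by $p$ points. Hence, if $v$ really needs $p+1$ points, then $C\subset\bigcup_i\operatorname{int}K_i$ and $C$ meets all $p+1$ of these disjoint open sets, contradicting connectedness.

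As for what each route buys: citing McLeod is economical. Your decomposition, by contrast, shows that Stage~2, which you flag as the main obstacle, is irrelevant for the paper's application. Theorem~\ref{thm:cyclic-mono-injective} only needs $F(\vec b)-F(\vec a)=\big(\sum_k\lambda_kJ_F(\vec\eta_k)\big)(\vec b-\vec a)$ for \emph{some} finite convex combination. By Remark~\ref{rem:simpleProperties}(2), any such combination of matrices with strictly cyclically monotone rows again has strictly cyclically monotone rows. Stage~1 together with Carath\'eodory (with $p+1$ points) already delivers this.
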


\begin{remark}\label{rem:McLeold-extend}
    Theorem \ref{thm:McLeod} has an immediate extension that can be formulated as follows.

{\em Let $\Omega$ be an open convex region in $\mathbb{R}^m$ and $F:\Omega\to\mathbb{R}^p$
be a continuously differentiable vector-valued function. Then for every couple of points
$\vec{a},\vec{b}\in\Omega$, $\vec{a}\neq\vec{b}$, there exist $p$ points $\vec{\eta}_1,\vec{\eta}_2,\ldots,\vec{\eta}_p$
lying in the interior of the segment $[\,\vec{a}\,,\vec{b}\,]$ and $p$ non-negative numbers
$\lambda_1, \lambda_2, \ldots, \lambda_p$ such that
\begin{equation*}
\lambda_1 + \lambda_2 + \cdots + \lambda_p = 1
\end{equation*}
and
\begin{equation}\label{eq:Fb-minus-Fa}
F(\vec{b}) - F(\vec{a}) = \bigg(\sum_{k=1}^p \lambda_k\, J_F(\vec{\eta}_k)\bigg)(\vec{b}-\vec{a}),
\end{equation}
where $J_F(\vec{\eta})$ is the Jacobian matrix of $F$ at a point $\vec{\eta}$.
}

Of course, to see it, it suffices to consider the vector-valued function 
$f:[0,1]\to\mathbb{R}^p$, $f(t):=F\big((1-t)\vec{a}+t\vec{b}\,\big)$.
\end{remark}

Combining the observation from Remark \ref{rem:McLeold-extend} with Theorem \ref{thm:det} we have
the following theorem.

\begin{theorem}\label{thm:cyclic-mono-injective}
Let $\Omega$ be an open convex region in $\mathbb{R}^p$ and $F:\Omega\to\mathbb{R}^p$
be continuously differentiable.
If the Jacobian matrix $J_F(\vec{a})$ has strictly cyclically monotone rows at every point $\vec{a}\in\Omega$ then $F$ is injective.
\end{theorem}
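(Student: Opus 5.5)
We argue by contradiction via the extended mean value theorem of Remark \ref{rem:McLeold-extend}. Suppose $F(\vec{a})=F(\vec{b})$ for two distinct points $\vec{a},\vec{b}\in\Omega$. Since $\Omega$ is open and convex, the whole segment $[\,\vec{a}\,,\vec{b}\,]$ lies in $\Omega$ and Remark \ref{rem:McLeold-extend} applies with $m=p$. We obtain points $\vec{\eta}_1,\ldots,\vec{\eta}_p$ in the interior of the segment and non-negative weights $\lambda_1,\ldots,\lambda_p$ summing to $1$ such that
\begin{equation*}
\vec{0}=F(\vec{b})-F(\vec{a})=M(\vec{b}-\vec{a}),\qquad M:=\sum_{k=1}^p \lambda_k\, J_F(\vec{\eta}_k).
\end{equation*}

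Next we show that $M$ is regular. Each $J_F(\vec{\eta}_k)$ has strictly cyclically monotone rows by hypothesis. Since the weights are non-negative and sum to $1$, at least one $\lambda_k$ is positive. Item 2 of Remark \ref{rem:simpleProperties} then shows that $M$ has strictly cyclically monotone rows. (For $p=1$ this reads $M=\sum\lambda_k J_F(\vec{\eta}_k)>0$, which is immediate.) By Theorem \ref{thm:det}, $\det M>0$, so $M$ is regular.

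The equation $M(\vec{b}-\vec{a})=\vec{0}$ with $M$ regular forces $\vec{b}=\vec{a}$, a contradiction. Hence $F$ is injective.

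There is no real obstacle here; the only points needing care are these:
\begin{itemize}
\item The hypotheses of Remark \ref{rem:McLeold-extend} (continuous differentiability on an open convex set, $\vec{a}\neq\vec{b}$) match the assumptions of the theorem exactly.
\item The convex combination needs at least one strictly positive coefficient for the strict version of Remark \ref{rem:simpleProperties} ad(2), and this is guaranteed by $\sum\lambda_k=1$.
\end{itemize}
This argument also gives a second proof of Theorem \ref{thm:hlavni}, but only after an additional reduction, since $J_\Psi$ itself need not have strictly cyclically monotone rows. One substitutes $y_i=e^{t_i}$ on the convex open set $t\in(-\infty,0)^p$. The Jacobian of $t\mapsto\Psi(e^{t_0},\ldots,e^{t_{p-1}})$ is $J_\Psi(\vec{y})\diag(y_0,\ldots,y_{p-1})$, which has strictly cyclically monotone rows by Lemma \ref{lem:DerivativeCyclic}. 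The composed map is therefore injective by the theorem, and since the substitution is a bijection onto $(0,1)^p$, $\Psi$ is injective there.
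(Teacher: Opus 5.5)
Your proof is correct and matches the paper's argument: apply the mean value formula of Remark~\ref{rem:McLeold-extend}, use Remark~\ref{rem:simpleProperties} ad(2) to see that the convex combination of Jacobians has strictly cyclically monotone rows, and conclude from Theorem~\ref{thm:det} that it is regular. Your closing remark about Theorem~\ref{thm:hlavni}, using the substitution $y_i=e^{t_i}$, is also exactly the paper's alternative proof.
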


\begin{proof}
Suppose that $\vec{a},\vec{b}\in\Omega$, $\vec{b}-\vec{a}\neq\vec{0}$. Equation (\ref{eq:Fb-minus-Fa})
is applicable to $F$ and, as noted in Remark \ref{rem:simpleProperties} ad(2), the convex combination
of Jacobian matrices on the RHS is also equal to a matrix with strictly cyclically monotone rows.
Hence, by Theorem \ref{thm:det}, this convex combination represents a regular $p\times p$ matrix.
Therefore, $F(\vec{b})-F(\vec{a})\neq\vec{0}$.
\end{proof}

\begin{proof}[An alternative proof of Theorem \ref{thm:hlavni}]
Consider the mapping $T:(-\infty,0)^p\to (0,1)^p$,
\begin{equation*}
T(x_0,x_1,\ldots,x_{p-1}):=\big(\exp(x_0),\exp(x_1),\ldots,\exp(x_{p-1})\big)^T.
\end{equation*}
$T$ is one-to-one and thus $\Psi$ is injective on $(0,1)^p$ if and only if $\Psi\circ T$ is injective
on $(-\infty,0)^p$. Regarding the respective Jacobi matrices we have
\begin{equation*}
    J_{\Psi\circ T}(\vec{x}) = J_\Psi(\vec{y})\,\diag(y_0,y_1\ldots,y_{p-1})\ \,
    \text{where}\ \,\vec{y} = (y_0,y_1\ldots,y_{p-1}) = T(\vec{x}).
\end{equation*}
By Lemma \ref{lem:TvarFunkce}, $J_{\Psi\circ T}(\vec{x})$ is
a matrix with strictly cyclically monotone rows for every $\vec{x}\in(-\infty,0)^p$.
Theorem \ref{thm:cyclic-mono-injective} then tells us that $\Psi\circ T$ is injective, and so is $\Psi$.
\end{proof}

\section{Sequences satisfying the Parry condition}\label{sec:sequencesParry}

Let $\mathcal{B} = (\beta_n)_{n\geq 0}$ be a Cantor real base, as explained in Introduction.
The definition of $\mathcal{B}$-expansion preserves the ordering of numbers, i.e. if $0\leq x<y <1$, then
$(x)_{\mathcal{B}}\prec_{lex} (y)_{\mathcal{B}} $. This property enables us to define
the {\it quasi-greedy $\mathcal{B}$-expansion of unity }
\begin{equation*}
d_{\mathcal B}^*(1) := \lim_{x\to 1_-}(x)_{\mathcal{B}}
\end{equation*}
where the limit is meant in the product topology.
Let us recall some properties of $d_{\mathcal B}^*(1) = d_0d_1d_2\ldots$.
\begin{enumerate}
    \item  $d_n$ is a non-negative integer for every index $n$;
    \item $d_0>0$ and $d_n >0$ for infinitely many indices $n$;
    \item  $\displaystyle 1 =\, \sum_{n=0}^{+\infty}  \frac{d_n}{\beta_0\beta_1\cdots \beta_{n}}$.
\end{enumerate}

To formulate the Parry condition, we will use the notation
\begin{equation*}
\sigma(\mathcal{B}) := (\beta_{n+1})_{n\geq 0}
\ \ \text{where}\ \ \mathcal{B} = (\beta_n)_{n\geq 0},
\end{equation*}
that is, $\sigma(\mathcal{B})$ is a shift of a sequence $\mathcal{B}$.

\begin{theorem}[\cite{CaDe2020}, \cite{ChaCi2021}]\label{thm:Parry-pre}
Let $\mathcal{B} = (\beta_n)_{n\geq 0}$ be a Cantor real base. A string $a_0a_1a_2\cdots$
of non-negative integers is the $\mathcal{B}$-expansion of a number $x \in [0,1)$ if and only if
\begin{equation}\label{eq:Parry-pre}
a_{n}a_{n+1}a_{n+2}\cdots \prec_{lex}  d_{\sigma^{n}(\mathcal B)}^*(1) \quad
\text{for every } \ n\geq 0.
\end{equation}
\end{theorem}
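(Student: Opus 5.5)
This is Theorem \ref{thm:Parry-pre}, quoted from \cite{CaDe2020,ChaCi2021}. I would prove it by the standard Parry-type argument: reduce everything to the greedy algorithm on the shifted bases, together with the monotonicity of the map $x\mapsto(x)_{\mathcal B}$ and the characterization of $d^*_{\mathcal B}(1)$ as a limit.

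\emph{Greedy algorithm.} For $x\in[0,1)$ the lexicographically greatest string satisfying (\ref{eq:Cantor-series}) is produced greedily. Set $a_0=\lfloor\beta_0 x\rfloor$ and $r_1=\beta_0x-a_0\in[0,1)$, then repeat the step with $r_1$ and the base $\sigma(\mathcal B)$. This gives the shift property: if $(x)_{\mathcal B}=a_0a_1\cdots$, then $a_na_{n+1}\cdots=(r_n)_{\sigma^n(\mathcal B)}$ with $r_n\in[0,1)$. Conversely, a string $a_0a_1\cdots$ with value in $[0,1)$ is greedy if and only if every tail $a_na_{n+1}\cdots$, evaluated in base $\sigma^n(\mathcal B)$, has value $<1$. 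Here the condition $\prod\beta_n=+\infty$ ensures that the remainders are well defined and that the series converge.

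\emph{Necessity.} Given a $\mathcal B$-expansion, each tail is $(r_n)_{\sigma^n(\mathcal B)}$ with $r_n<1$. Choose $r_n<y<1$. Monotonicity of expansions gives $(r_n)\prec_{lex}(y)$ strictly, and letting $y\to1^-$ gives $(r_n)_{\sigma^n(\mathcal B)}\preceq_{lex} d^*_{\sigma^n(\mathcal B)}(1)$. To upgrade this to strict inequality, note that in the product topology $(y)$ converges to $d^*$. Since $(r_n)\prec_{lex}(y)$ for each $y$, and $(y)$ eventually agrees with $d^*$ on any finite prefix, $(r_n)$ first differs from $(y)$ at some position $k$. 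That position stays bounded, because $(y)$ is increasing in $y$ and $r_n<y$ is fixed. Hence $(r_n)\prec_{lex} d^*$ strictly.

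\emph{Sufficiency.} This is the main obstacle. Assume (\ref{eq:Parry-pre}) holds. I will show that every tail has value $<1$ in its shifted base; by the greedy criterion the string is then the expansion of its value $x<1$. The key lemma is: if a string $s$ satisfies $s_m s_{m+1}\cdots\prec_{lex}d^*_{\sigma^m(\mathcal C)}(1)$ for all $m$, then $\mathrm{val}_{\mathcal C}(s)<1$. Let $k$ be the first index where $s$ differs from $d=d^*_{\mathcal C}(1)$, so $s_k\le d_k-1$. Compare the two values using property~3 of $d^*$, namely $\sum d_n/(\beta_0\cdots\beta_n)=1$. What remains to show is that the tail of $s$ after $k$ has value $\le1$ in base $\sigma^{k+1}(\mathcal C)$, and that the tail of $d$ contributes enough to cover the deficit.

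The tail bound $\le 1$ comes from a quasi-greedy fact: any string lexicographically $\preceq d^*_{\mathcal C}(1)$ at every shift has value $\le1$. I would prove this by truncation. Truncate $s$ at length $N$; the finite prefixes are lexicographically below $d^*$ at every shift. Induct backwards on $N$, using that $d^*$ satisfies $\sigma^n(d^*_{\mathcal C}(1))\preceq d^*_{\sigma^n\mathcal C}(1)$, which holds because it is a limit of expansions. Strict inequality then follows by the deficit $s_k\le d_k-1$ combined with the tail value $\le 1$: the value of $s$ is at most
\[
\sum_{n<k}\frac{d_n}{\beta_0\cdots\beta_n}+\frac{d_k-1}{\beta_0\cdots\beta_k}+\frac{1}{\beta_0\cdots\beta_k}.
\]
This is $\le 1$, with equality excluded because $d$ has infinitely many nonzero digits while the tail condition is strict. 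Handling this equality case carefully is where I expect the main difficulty.
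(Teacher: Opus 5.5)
The paper gives no proof of this statement. It is quoted from \cite{CaDe2020} and \cite{ChaCi2021} as background, so there is no internal argument to compare yours with. Your proposal is the standard Parry-type argument: greedy digits and remainders, the shift property, closedness of $\preceq_{lex}$ in the product topology, and a first-difference comparison with $d^*$. Taken as a whole, it is correct.

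You single out the equality case as the main difficulty, but it is immediate. Your displayed bound equals $\sum_{n\le k} d_n/(\beta_0\cdots\beta_n)$. This is strictly less than $\sum_{n\ge 0} d_n/(\beta_0\cdots\beta_n)=1$ because $d_n>0$ for some $n>k$. Only the infinitely many nonzero digits of $d^*$ are needed here. The strictness of the hypothesis is used elsewhere: to obtain a first difference with $s_k<d_k$, and to get the same conclusion for every tail.

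The auxiliary fact $\sigma^n(d^*_{\mathcal C}(1))\preceq_{lex} d^*_{\sigma^n\mathcal C}(1)$ is true, but your truncation step does not need it. Suppose $w$ satisfies $w_n=0$ for $n\ge N$ and $\sigma^m(w)\preceq_{lex} d^*_{\sigma^m\mathcal C}(1)$ for all $m$. Then $w\neq d^*_{\mathcal C}(1)$, since the latter has infinitely many nonzero digits. The same first-difference estimate, with induction on $N$ carried out simultaneously over all shifted bases, gives $\mathrm{val}_{\mathcal C}(w)<1$. Passing to truncations then yields $\mathrm{val}_{\mathcal C}(t)\le 1$ for any infinite string $t$ satisfying the non-strict conditions at every shift.

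In the necessity part, the "bounded first-difference position" reasoning is correct but roundabout. Fix a single $y_0\in(r_n,1)$ and use closedness of $\preceq_{lex}$ to get $(r_n)_{\sigma^n(\mathcal B)}\prec_{lex}(y_0)_{\sigma^n(\mathcal B)}\preceq_{lex} d^*_{\sigma^n(\mathcal B)}(1)$ directly.
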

Obviously, if the base $\mathcal B$ is alternate of period $p$, then
$d_{\sigma^{n}(\mathcal B)}^*(1) = d_{\sigma^{n+p}(\mathcal B)}^*(1)$ for every $n\geq 0$.

Recall that the numeration system introduced by A. R\'enyi in \cite{Renyi1957} is a special case of the
alternate numeration system for $p=1$. This means that Theorem \ref{thm:Parry-pre}, if specialized to
$p=1$, is concerned with R\'enyi numeration systems. In this particular case, it has been derived
by Parry in \cite{Parry1960}. But Parry's name is now associated with condition (\ref{eq:Parry-pre})
in the general case as well.

\begin{example*} The classical decimal system  is an alternate base system  of period $p=1$ and $d_{\mathcal B}^*(1) = d_{\sigma^{n}(\mathcal B)}^*(1) = 999999\cdots$ for every non-negative integer $n$.

For the original Cantor base $\mathcal{B} = (\beta_n)_{n\geq 0}$ consisting of integers
$\beta_n>1$, the quasi-greedy expansions of unity have the form
$ d_{\sigma^{n}(\mathcal B)}^*(1) = (\beta_{n}-1)(\beta_{n+1}-1)(\beta_{n+2}-1)\cdots$
for every $n\geq 0$.
\end{example*}

\begin{definition}\label{def:Parry}
Let $p\in \mathbb{N}$ and ${\bf d}^{(\ell)} = d^{(\ell)}_0d^{(\ell)}_1d^{(\ell)}_2\cdots$
be a sequence of non-negative integers for each $\ell = 0,1,\ldots, p-1$.
For every index $n\geq 0$, define  ${\bf d}^{(n)} := {\bf d}^{(\ell)}$ where $n = \ell \mod p$.

We say that the list ${\bf d}^{(0)}, {\bf d}^{(1)}, \ldots, {\bf d}^{(p-1)}$ satisfies
{\em the Parry condition} if for each $\ell=0,1,\ldots, p-1$ the following inequalities
take place:
\begin{itemize}
  \item   $d^{(\ell)}_0>0$,
  \item $d^{(\ell)}_n>0$ for infinitely many indices $n$,
   \item
  $d_{n}^{(\ell)}d_{n+1}^{(\ell)}d_{n+2}^{(\ell)} \cdots  \preceq_{lex} {\bf d}^{(\ell+n)}$
  for every $n\geq 0$.
\end{itemize}
\end{definition}
\begin{remark}
We emphasize that the lexicographic inequality in the Parry condition enforces the boundedness
of the sequences ${\bf d}^{(\ell)}$ for every $\ell = 0,1,\ldots, p-1$. In particular,
each term $d^{(\ell)}_n$  is less than or equal to
$H = \max\{d^{(0)}_0,d^{(1)}_0,\ldots, d^{(p-1)}_0\}$.
\end{remark}

It is well known that the Parry condition holds for every list of sequences that is
associated with an alternate Cantor real base.

\begin{proposition}[\cite{CaDe2020}, \cite{ChaCi2021}]
Let $\mathcal{B}$ be an alternate real base of period $p \in \mathbb{N}$. Then the list
$d_{\mathcal B}^*(1), d_{\sigma(\mathcal B)}^*(1), \ldots, d_{\sigma^{p-1}(\mathcal B)}^*(1) $
satisfies the Parry condition.
\end{proposition}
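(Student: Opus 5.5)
The plan is to obtain the Parry condition as a limit of the strict lexicographic inequalities in Theorem~\ref{thm:Parry-pre}, taken as $x\to 1_-$. For $\ell\in\{0,1,\ldots,p-1\}$ put ${\bf d}^{(\ell)}:=d^*_{\sigma^{\ell}(\mathcal B)}(1)$. The shifted sequence $\sigma^\ell(\mathcal B)$ is again a Cantor real base: all its terms exceed one, and its infinite product diverges because only finitely many factors were removed. It is also alternate of period $p$. Hence the properties of quasi-greedy expansions of unity listed before Theorem~\ref{thm:Parry-pre} apply to $\sigma^\ell(\mathcal B)$. In particular $d^{(\ell)}_0>0$ and $d^{(\ell)}_n>0$ for infinitely many $n$, which are the first two items of Definition~\ref{def:Parry}. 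Periodicity gives $d^*_{\sigma^{n}(\mathcal B)}(1)=d^*_{\sigma^{n+p}(\mathcal B)}(1)$, which matches the convention ${\bf d}^{(n)}={\bf d}^{(n \bmod p)}$. So it remains to prove, for a fixed $\ell$ and every $n\geq 0$, that
\[
d_n^{(\ell)}d_{n+1}^{(\ell)}\cdots \preceq_{lex} d^*_{\sigma^{\ell+n}(\mathcal B)}(1).
\]

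Replacing $\mathcal B$ by $\sigma^\ell(\mathcal B)$, we may assume $\ell=0$. Take any $x\in[0,1)$ and let $(x)_{\mathcal B}=a_0a_1a_2\cdots$. The ``only if'' direction of Theorem~\ref{thm:Parry-pre} gives
\[
a_na_{n+1}\cdots\prec_{lex} d^*_{\sigma^n(\mathcal B)}(1)\quad\text{for every } n\geq 0 .
\]
By definition, $(x)_{\mathcal B}\to d^*_{\mathcal B}(1)$ in the product topology as $x\to1_-$. The shift map is continuous in this topology, so for each fixed $n$ the tail $a_na_{n+1}\cdots$ converges to $d_nd_{n+1}\cdots$.

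The final step, and the only one requiring care, is that a non-strict lexicographic inequality is preserved under such limits. Fix the comparison sequence $\mathbf e:=d^*_{\sigma^n(\mathcal B)}(1)$. Suppose, for contradiction, that $\mathbf u:=d_nd_{n+1}\cdots\succ_{lex}\mathbf e$. Then $\mathbf u$ and $\mathbf e$ first differ at some position $m$, with $u_m>e_m$. Every sequence agreeing with $\mathbf u$ on positions $0,\ldots,m$ is therefore also $\succ_{lex}\mathbf e$. Convergence in the product topology means that for $x$ close enough to $1$, the tail $a_na_{n+1}\cdots$ agrees with $\mathbf u$ on its first $m+1$ entries. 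That tail would then be $\succ_{lex}\mathbf e$, contradicting the strict inequality above. Hence $\mathbf u\preceq_{lex}\mathbf e$, which is the third item of the Parry condition.
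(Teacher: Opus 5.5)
The paper gives no proof of this proposition; it is quoted from \cite{CaDe2020}, \cite{ChaCi2021}, so there is no internal argument to compare yours with. Your proposal is correct and complete relative to what the paper does recall. These are the limit definition of $d^*_{\mathcal B}(1)$, the listed properties of quasi-greedy expansions of unity, and Theorem~\ref{thm:Parry-pre}.

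The individual steps all hold:
\begin{itemize}
\item The shift $\sigma^\ell(\mathcal B)$ is again a Cantor real base, so the first two items of Definition~\ref{def:Parry} follow from the listed properties.
\item Periodicity identifies ${\bf d}^{(\ell+n)}$ with $d^*_{\sigma^{\ell+n}(\mathcal B)}(1)$.
\item The reduction to $\ell=0$ is legitimate, because $\sigma^n(\sigma^\ell(\mathcal B))=\sigma^{\ell+n}(\mathcal B)$.
\item The third item follows from the strict inequalities of Theorem~\ref{thm:Parry-pre} by letting $x\to 1_-$.
\end{itemize}

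Your limit step is sound, and it implicitly uses that the digits are integers. Hence coordinatewise convergence forces each of the finitely many coordinates $n,\ldots,n+m$ to be eventually constant, equal to the limit value, for $x$ close to $1$. This is what makes a strict inequality degrade only to a non-strict one in the limit.

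Your argument in fact proves $d_nd_{n+1}\cdots\preceq_{lex} d^*_{\sigma^n(\mathcal B)}(1)$ for every Cantor real base, periodic or not. The alternate hypothesis only serves to collapse the infinite family of comparison sequences to the finite list ${\bf d}^{(0)},\ldots,{\bf d}^{(p-1)}$. This is consistent with the paper's remark in the concluding section that the infinite list attached to a non-periodic base also satisfies the Parry condition.
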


The inverse relationship is a more demanding question. The existence problem has already been solved.

\begin{theorem}[\cite{ChaKrMaPe2026}]\label{thm:Existence}
Let ${\bf d}^{(0)}, {\bf d}^{(1)}, \ldots, {\bf d}^{(p-1)}$  satisfy the Parry condition.
Then there exists an alternate real base $\mathcal{B}$ of period $p$ such that
$d_{\sigma^{\ell}(\mathcal B)}^*(1)= {\bf d}^{(\ell)}  $ for each $\ell = 0,1,\ldots, p-1$.
\end{theorem}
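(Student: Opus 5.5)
We look for the base in the form $\beta_i=1/y_i$ with $\vec{y}=(y_0,\ldots,y_{p-1})\in(0,1)^p$, extended periodically. Let $f^{(\ell)}$ be the alternate power series of type $p$ with coefficients $a_n=d^{(\ell)}_n$. By the Parry condition these coefficients are bounded by $H$, satisfy $d^{(\ell)}_0>0$, and are positive infinitely often, so $f^{(\ell)}$ is indeed an alternate power series. Let $\Psi$ be the associated vector-valued function of Definition \ref{def:PSI}. Property 3 of the quasi-greedy expansion shows that we must solve
\[
\Psi(\vec{y})=(1,1,\ldots,1)^T,\qquad \vec{y}\in(0,1)^p .
\]
The proof then splits into two steps. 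First, show that this system has a solution. Second, show that for a solution the equalities $d^*_{\sigma^\ell(\mathcal B)}(1)={\bf d}^{(\ell)}$ actually hold, and not merely that ${\bf d}^{(\ell)}$ is some expansion of $1$.

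\textbf{Step 1: existence of a solution.} I would use a fixed point argument. Each $\psi_i$ is continuous and non-decreasing in every variable. It is strictly increasing in $y_i$, because the $n=0$ term is $d^{(i)}_0y_i$. Moreover, for fixed positive values of the other coordinates, $\psi_i\to+\infty$ as $y_i\to1^-$, since infinitely many coefficients are positive. Hence, for given $y_k$, $k\neq i$, there is a unique $t=G_i(\vec{y})\in(0,1)$ with $\psi_i(y_0,\ldots,t,\ldots,y_{p-1})=1$. The map $G=(G_0,\ldots,G_{p-1})$ is continuous and antitone, and its fixed points are exactly the solutions of the system. To apply Brouwer's theorem, I need a box $[a,b]^p\subset(0,1)^p$ mapped into itself. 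Since $G$ is antitone, it suffices that $G_i(b,\ldots,b)\geq a$ and $G_i(a,\ldots,a)\leq b$ for all $i$.

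\textbf{Main obstacle: the a priori bounds for Step 1.} This is where I expect the main difficulty. The bound $\psi_i(\vec{y})\le Hy_i/(1-\max_k y_k)$ shows that the lower bound $a$ depends on $b$. When $d^{(i)}_0=1$, the value $G_i(a,\ldots,a)$ tends to $1$ as $a\to0$, so the two requirements compete. To close the loop I would exploit the lexicographic inequalities of the Parry condition, which tie the tails of ${\bf d}^{(i)}$ to the other sequences ${\bf d}^{(i+n)}$. The aim is to prove uniform bounds $\varepsilon\le y_i\le1-\varepsilon$ valid for any solution, and for truncated versions of the problem.

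If this direct approach fails, I would switch to approximation. Replace each ${\bf d}^{(\ell)}$ by a sequence of eventually periodic lists satisfying the Parry condition and converging to it in the product topology. For such lists, existence reduces to finitely many algebraic equations in each period. Then extract a convergent subsequence of the corresponding $\vec{y}$, using the uniform bounds above. Finally, pass to the limit in $\Psi(\vec{y})=\vec{1}$ by dominated convergence, since all coefficients are bounded by $H$.

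\textbf{Step 2: identification of the quasi-greedy expansions.} Given a solution $\vec{y}$ with associated base $\mathcal B$, I would show that ${\bf d}^{(\ell)}=d^*_{\sigma^\ell(\mathcal B)}(1)$. Call $x_k:=\sum_{n\ge k} d^{(\ell)}_n\prod_{i=k}^{n}y_{\ell+i}$ the tail of the expansion from position $k$. By Theorem \ref{thm:Parry-pre}, applied to truncations, every proper tail of ${\bf d}^{(\ell)}$ satisfies the Parry inequalities. Consequently the tails obey $x_k\le1$. I would then show that the expansion coincides with the lexicographic limit of the greedy expansions of $x\to1^-$. The standard argument from the case $p=1$ should carry over: compare ${\bf d}^{(\ell)}$ with $d^*_{\sigma^\ell(\mathcal B)}(1)$ at the first index where they differ, and derive a contradiction with the equality of sums, using the Parry inequality in one direction and quasi-greediness in the other.
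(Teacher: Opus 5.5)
Step 1 has a genuine gap. You never produce the invariant box, and the obstacle you flag comes from your estimate, not from the problem. The bound $\psi_i(\vec y)\le Hy_i/(1-\max_k y_k)$ lets the other coordinates into the denominator. But among the indices $0,1,\ldots,n$ exactly $\lceil (n+1)/p\rceil$ are divisible by $p$, so $\prod_{k=0}^{n}y_{i+k}\le y_i^{\lceil (n+1)/p\rceil}$. Hence $\psi_i(\vec y)\le Hp\,y_i/(1-y_i)$, whatever the other coordinates are. Take $a:=1/(Hp+1)$; then $\psi_i\le 1$ on the face $y_i=a$, so $G_i\ge a$ everywhere. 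Now put $b:=\max_i G_i(a,\ldots,a)\in[a,1)$. Antitonicity gives $G([a,b]^p)\subset[a,b]^p$, and Brouwer's fixed-point theorem finishes Step 1. This is exactly the estimate behind the choice of $\varepsilon'$ in Lemma \ref{lem:krychle}, where $\psi_i$ is evaluated at $y_i=\varepsilon'$ with all other coordinates equal to $1$.

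The remedies you propose would not close the gap. The lexicographic Parry inequalities play no role in solving $\Psi(\vec y)=\vec 1$; only boundedness, $d^{(i)}_0\ge 1$ and one further positive coefficient are used. The approximation route presupposes the same missing uniform bounds, plus an existence result for the approximating systems that you do not supply. There is also a smaller error: for $p\ge 2$ it is false that $\psi_i\to+\infty$ as $y_i\to 1^-$ with the other coordinates fixed in $(0,1)$, because the series converges at $y_i=1$. $G_i$ is still well defined, but for a different reason: that limit exceeds $d^{(i)}_0\ge 1$.

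With this repair, your route is genuinely different from the paper's and more elementary. The paper proves $\Psi((0,1)^p)\supset(0,1]^p$ by combining four ingredients:
\begin{itemize}
\item the injectivity of $\Psi$ (Theorem \ref{thm:hlavni}, i.e.\ the determinant machinery of Section \ref{sec:cyclicMonotone});
\item Brouwer's invariance of domain;
\item the boundary estimates of Lemma \ref{lem:krychle}, namely $\psi_i<r$ on the face $y_i=\varepsilon'$ and $\psi_i>1$ on the face $y_i=1-\varepsilon''$;
\item a connectedness argument.
\end{itemize}
Your antitone map needs only estimates of this boundary kind and the fixed-point theorem, in the spirit of Poincar\'e--Miranda, with no injectivity at all.

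As for Step 2, the paper does not carry it out: Remark \ref{rem:jinydukaz} takes the identification $d^*_{\sigma^\ell(\mathcal B)}(1)={\bf d}^{(\ell)}$ for granted once $\Psi(\vec y)=\vec 1$. Your outline can be completed, but the order matters, because Theorem \ref{thm:Parry-pre} refers to the still unknown $d^*_{\sigma^n(\mathcal B)}(1)$. First obtain ${\bf d}^{(j)}\preceq_{lex} d^*_{\sigma^j(\mathcal B)}(1)$ from the maximality of the quasi-greedy expansion among representations of $1$ with infinitely many nonzero digits. Only then do the truncated tails satisfy the hypothesis of Theorem \ref{thm:Parry-pre}, which gives $x_k\le 1$. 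After that, your first-difference argument yields the reverse inequality.
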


Let us mention that in \cite{ChaKrMaPe2026}, some combinatorial properties of the set of so-called
$\mathcal{B}$-integers were used to prove the existence  of a base  $\mathcal{B}$ associated with
a given list of sequences satisfying the Parry condition.
We are able to provide an alternative proof of the existence theorem. Since our proof relies solely on some
topological arguments we postpone it to Appendix.

In the article \cite{ChaKrMaPe2026}, the question of the ambiguity of the base $\mathcal{B}$
has been addressed, but only for lists
${\bf d}^{(0)}, {\bf d}^{(1)}, \ldots, {\bf d}^{(p-1)}$ of a  particular form. Having
equipped ourselves with the tools developed in the preceding sections, we are now ready to deal
with the general case.

\begin{theorem}
Let $\mathcal{B}_1$  and $\mathcal{B}_2$ be two alternate real bases of period $p$ with the same
list of quasi-greedy expansions of unity. Then  $\mathcal{B}_1=\mathcal{B}_2$.
\end{theorem}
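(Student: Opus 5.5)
The plan is to encode both bases as a single point of $(0,1)^p$ and then invoke the injectivity result, Theorem~\ref{thm:hlavni}. Let the common list of quasi-greedy expansions be ${\bf d}^{(\ell)}=d^{(\ell)}_0d^{(\ell)}_1\cdots$, $\ell=0,\dots,p-1$. This list satisfies the Parry condition. In particular each ${\bf d}^{(\ell)}$ is bounded (by the remark following Definition~\ref{def:Parry}), has $d^{(\ell)}_0>0$, and has infinitely many positive terms. Hence
\[
f^{(\ell)}(y_0,\dots,y_{p-1}):=\sum_{n\ge0} d^{(\ell)}_n\prod_{i=0}^{n}y_{i \bmod p}
\]
is an alternate power series of type $p$ for each $\ell$. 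Let $\Psi$ be the vector-valued function associated with $f^{(0)},\dots,f^{(p-1)}$ as in Definition~\ref{def:PSI}.

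For an alternate base $\mathcal{B}=(\beta_n)$ of period $p$, I will set $y_m:=1/\beta_m\in(0,1)$ for $m=0,\dots,p-1$. The shifted base $\sigma^\ell(\mathcal B)$ is again alternate, with period block $\beta_\ell,\beta_{\ell+1},\dots,\beta_{\ell+p-1}$ (indices mod $p$). Property~3 of quasi-greedy expansions applied to $\sigma^\ell(\mathcal B)$ gives
\[
1=\sum_{n\ge0}\frac{d^{(\ell)}_n}{\beta_\ell\beta_{\ell+1}\cdots\beta_{\ell+n}}
=f^{(\ell)}(y_\ell,y_{\ell+1},\dots,y_{\ell+p-1})=\psi_\ell(\vec y).
\]
So $\Psi(\vec y)=(1,\dots,1)^T$, and this holds for the vector $\vec y$ coming from $\mathcal B$, whichever base it is. One bookkeeping point must be checked carefully: the index convention $\prod_{i=0}^n y_{i\bmod p}$ applied to the cyclically rotated argument $(y_\ell,\dots,y_{\ell+p-1})$ must really reproduce $\beta_\ell\cdots\beta_{\ell+n}$. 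It does, because the $i$-th coordinate of the rotated vector is $y_{\ell+i}=1/\beta_{\ell+i}$ by periodicity.

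With this in hand, the two bases $\mathcal B_1,\mathcal B_2$ give points $\vec y^{(1)},\vec y^{(2)}\in(0,1)^p$. Both satisfy $\Psi(\vec y^{(1)})=\Psi(\vec y^{(2)})=(1,\dots,1)^T$, and $\Psi$ is the same map in both cases because it depends only on the common list ${\bf d}^{(\ell)}$. Theorem~\ref{thm:hlavni} then gives $\vec y^{(1)}=\vec y^{(2)}$. Hence the two bases have the same period block $\beta_0,\dots,\beta_{p-1}$, and by periodicity $\mathcal B_1=\mathcal B_2$.

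The main obstacle is just verifying the hypotheses: that $\beta_m>1$ gives $y_m\in(0,1)$ strictly, so we land in the open cube where Theorem~\ref{thm:hlavni} applies, and that the convergence of the defining series is guaranteed by boundedness of the digits. All the analytic work is already contained in Theorem~\ref{thm:hlavni}.
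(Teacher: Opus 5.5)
Your proposal is correct and follows the paper's proof essentially verbatim. You set $y_m=1/\beta_m$, note that both bases give a point of $(0,1)^p$ solving $\Psi(\vec y)=(1,\ldots,1)^T$ for the same $\Psi$, and conclude by the injectivity in Theorem~\ref{thm:hlavni}; your checks of digit boundedness and of the index bookkeeping under cyclic rotation fill in details the paper leaves implicit.
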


\begin{proof}
Let ${\bf d}^{(\ell)} = (d^{(\ell)}_n)_{n\geq 0}$, for $\ell=0,1,\ldots, p-1$ form the list
of quasi-greedy expansions of unity. In particular,  $d^{(\ell)}_0 >0$
for every $\ell = 0,1,\ldots, p-1$.
Let $\Psi$ be the vector-valued function associated with the alternate power series
$f^{(0)}, f^{(1)}, \ldots, f^{(p-1)}$,  where
\begin{equation*}
f^{(\ell)}(\vec{y})= f^{(\ell)}(y_0, y_1, \ldots, y_{p-1}) = \sum_{n=0}^{+\infty} {d^{(\ell)}_n}\prod_{i=0}^{n}y_{i\!\!\!\!\mod p}
\end{equation*}
Theorem \ref{thm:hlavni} implies that there exists at most one $p$-tuple
$\vec{y} = (y_0,y_1,\ldots, y_{p-1}) \in (0,1)^p$ such that
for each $\ell = 0,1,\ldots, p-1$,
\begin{equation*}
1=\psi_{\ell}(\vec{y}) = f^{(\ell)}(y_{\ell}, y_{\ell+1}, \ldots, y_{\ell+p-1}) =
\sum_{n=0}^{+\infty} d^{(\ell)}_n\prod_{i=0}^n{y_{\ell+i \!\!\mod p}}.
\end{equation*}
Defining $(\beta_0, \beta_1, \ldots, \beta_{p-1}):=(y_0^{\,-1}, y_1^{\,-1}, \ldots, y_{p-1}^{\,-1})$,
we deduce that there  exists at most one $p$-tuple $(\beta_0, \beta_1, \ldots, \beta_{p-1})$
of numbers $>1$ such that the alternate base $(\beta_{n})_{n\geq 0}$ of period $p$ satisfies
\begin{equation*}
1=\sum_{n=0}^{+\infty}  \frac{d^{(\ell)}_n}{\beta_{\ell}\beta_{\ell+1}\cdots \beta_{\ell+n}}\quad
\text{ for each $\ell =0,1,\ldots, p-1.$}
\end{equation*}
This proves the theorem.
\end{proof}

\section{Concluding comments}


To show the irrationality of a real number, one must prove that the number does not belong
to the subfield $\mathbb{Q}$. This concept can be generalized to the irrationality with respect
to other subfields of real numbers, e.g., to the extension of the field $\mathbb{Q}$ by
an algebraic number $\alpha$, which is usually denoted as $\mathbb{Q}(\alpha)$.
It would be interesting to find criteria for this type of irrationality using a real Cantor basis.

However, it should be noted that when applying the irrationality criterion, a Cantor base
$\mathcal{B}$ is used for this purpose that is not periodic, i.e., $\mathcal{B}$ is not alternate.
In such a case, the list of quasi-greedy expansions of unity associated with $\mathcal{B}$
is infinite and also satisfies  Parry's condition. But the question we have solved here
for alternate Cantor bases has not been addressed yet at all in the case of non-periodic
bases.

\newpage

\makeatletter
\renewcommand{\@seccntformat}[1]{}
\makeatother

\setcounter{section}{0}
\renewcommand{\thesection}{\Alph{section}}

\setcounter{equation}{0}
\renewcommand{\theequation}{\Alph{section}\arabic{equation}}

\appsection{The range of $\Psi$}

Let us discuss the range of the mapping $\Psi=(\psi_{0},\psi_{1},\ldots,\psi_{p-1})^{T}:[0,1)^{p}\to\mathbb{R}^{p}$
associated with a $p$-tuple of alternate power series of type $p\in\mathbb{N}$,
as introduced in Definition \ref{def:PSI}.
For each $i=0,1,\ldots, p-1$ we assume that the coefficients $a^{(i)}_n$ in
$$\psi_i(\vec{y}) = \sum_{n=0}^{+\infty} a^{(i)}_n\prod_{k=0}^{n}y_{i+k \!\!\!\mod p}$$
have these two properties:
\begin{description}
    \item[$\mathcal{P}1$: ] \ \ $(a^{(i)}_n)_{n \geq 0}$  is a  bounded sequence of non-negative numbers,
    \item [$\mathcal{P}2$: ] \ \ $a^{(i)}_0 \geq 1 $ and  $a^{(i)}_0 + a^{(i)}_n > 1 $ for some  $n\geq 0$.
\end{description}

Our goal is to prove the following theorem.
\begin{athm}\label{thm:range}
$\Psi\big((0,1)^{p}\big)\supset(0,1]^{p}$.
\end{athm}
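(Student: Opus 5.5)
The plan is a connectedness argument: I will show that $S:=\Psi\big((0,1)^p\big)\cap(0,1]^p$ is a nonempty subset of the connected set $(0,1]^p$ that is both relatively open and relatively closed in it. It then follows that $S=(0,1]^p$.

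\emph{Openness.} Each $\psi_i$ is continuous on $(0,1)^p$, since by $\mathcal{P}1$ the series converges locally uniformly there. For injectivity, note that Lemma~\ref{lem:TvarFunkce}, Lemma~\ref{lem:DerivativeCyclic} and Corollary~\ref{col:positiveMinor} only use non-negativity of the coefficients together with $a_0>0$, and $\mathcal{P}2$ gives $a^{(i)}_0\ge 1$. The argument of Theorem~\ref{thm:hlavni} therefore applies and $\Psi$ is injective on $(0,1)^p$. Brouwer's invariance of domain, applied to the continuous injective map $\Psi:(0,1)^p\to\mathbb{R}^p$, shows that $\Psi\big((0,1)^p\big)$ is open in $\mathbb{R}^p$. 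Hence $S$ is relatively open in $(0,1]^p$.

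\emph{Nonemptiness.} For any $\vec y\in(0,1)^p$ every $\psi_i(\vec y)$ is positive, because $a^{(i)}_0y_i>0$. Moreover $\psi_i(t\vec y)\le C\,t\to 0$ as $t\to0^+$, where $C$ bounds the coefficients times $\sum_n (\max_j y_j)^n$. So for small $t$ we get $\Psi(t\vec y)\in(0,1)^p\subset(0,1]^p$, and $S\neq\emptyset$.

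\emph{Closedness.} This is the main step. Let $\vec y^{\,k}\in(0,1)^p$ with $\Psi(\vec y^{\,k})\to\vec c\in(0,1]^p$. Pass to a subsequence so that $\vec y^{\,k}\to\vec y\in[0,1]^p$; I must show $\vec y\in(0,1)^p$.

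First, no coordinate tends to $0$. We have $\psi_i(\vec y)\le y_i\sum_{n\ge0}M\,(\max_j y_j)^n$. If some $y_j^k\to1$ this bound is not uniform, so I would instead use $\psi_i\le M y_i\sum_n\prod_{m=1}^{n}y_{i+m}$ and argue in the order below. Suppose some $y^k_i\to0$ while all coordinates stay bounded away from $1$; then $\psi_i(\vec y^{\,k})\to0\neq c_i$, a contradiction. Thus it suffices to rule out coordinates tending to $1$ first, and then coordinates tending to $0$.

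Next, no coordinate tends to $1$. Suppose $y^k_i\to1$. Use $\mathcal{P}2$: either $a^{(i)}_0>1$, in which case $\liminf\psi_i\ge a^{(i)}_0>1\ge c_i$, a contradiction; or there is an $n\ge1$ with $a^{(i)}_n>0$ and $a^{(i)}_0+a^{(i)}_n>1$. In the latter case
\[
\liminf\psi_i(\vec y^{\,k})\ \ge\ a^{(i)}_0+a^{(i)}_n\liminf\prod_{m=1}^{n}y^k_{i+m},
\]
and I need that product to stay bounded away from $0$.

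This coupling is the delicate point. I would handle it by choosing, among coordinates tending to $1$ and coordinates tending to $0$, an extremal index along the cyclic order, namely an $i$ whose successors $i+1,\dots$ do not tend to $0$. If instead $y_{i+1}^k\to0$, I switch attention to the index $i+1$. Its $\psi_{i+1}$ is bounded by $M y_{i+1}\sum_n\prod_{m=1}^{n}y_{i+1+m}$, and I must show this series stays bounded. It does unless all coordinates tend to $1$, a case which is excluded directly, since then $\psi_i\to\sum_n a^{(i)}_n>1$.

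Carrying out this cyclic bookkeeping cleanly is where I expect the real work to lie. The conclusion I aim for is that the limit $\vec y$ lies in $(0,1)^p$. Continuity then gives $\Psi(\vec y)=\vec c$, so $\vec c\in S$ and $S$ is closed, which completes the argument.
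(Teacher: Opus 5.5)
Your strategy is sound, and it takes a genuinely different route from the paper's. The paper fixes $r$ and uses Lemma~\ref{lem:krychle} to build an explicit cube $K=[\varepsilon',1-\varepsilon'']^p$ with $\Psi(\partial K)\cap[r,1]^p=\emptyset$. It then places the connected set $[r,1]^p$ inside the disjoint union of the open sets $\mathbb{R}^p\setminus\Psi(K)$ (open because $\Psi(K)$ is compact) and $\Psi(\mathrm{Int}\,K)$ (open by invariance of domain), and finally lets $r\to 0$. You instead run a single open--closed argument on $(0,1]^p$, replacing the explicit cube by sequential compactness of $[0,1]^p$. In effect you prove a properness statement for $\Psi$ over $(0,1]^p$. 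This spares you choosing $\varepsilon',\varepsilon''$ and the compactness/splitting step. The paper's version has the advantage of localizing the preimage of $[r,1]^p$ in an explicit cube. The analytic input is the same: the two face estimates in Lemma~\ref{lem:krychle} are uniform versions of your ``no coordinate tends to $0$'' and ``no coordinate tends to $1$''.

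The closedness step you left open needs no cyclic bookkeeping. Your proposed order, excluding limits equal to $1$ before limits equal to $0$, is the circular one, as you noticed; reverse it. For every $j$, each block of $p$ consecutive factors in $\prod_{m=1}^{n}y_{j+m}$ contains $y_j$ exactly once. So, with $M$ a bound for the coefficients,
\[
\omega_j(\vec y)=\sum_{n\ge 0}a^{(j)}_n\prod_{m=1}^{n}y_{j+m}\le Mp\sum_{q\ge 0}\Big(\prod_{m=0}^{p-1}y_m\Big)^{q}\le \frac{Mp}{1-y_j},
\]
however close the other coordinates are to $1$. This makes your remark ``the series stays bounded unless all coordinates tend to $1$'' quantitative. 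It is also what gives the paper continuity of $\psi_i$ at $(1,\dots,1,0,1,\dots,1)$.

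Hence $y^k_j\to 0$ forces $\psi_j(\vec y^{\,k})\le Mp\,y^k_j/(1-y^k_j)\to 0$, contradicting $c_j>0$, so every limit coordinate is positive. Now suppose $y^k_i\to 1$ and pick $n\ge 1$ with $a^{(i)}_n>0$. Then
\[
\liminf_k\psi_i(\vec y^{\,k})\ge a^{(i)}_0+a^{(i)}_n\prod_{m=1}^{n}y_{i+m}>1\ge c_i,
\]
because every $y_{i+m}>0$ and $a^{(i)}_0\ge 1$. The all-ones case is included here, so the extremal-index device is unnecessary.

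One small precision: the existence of $n\ge 1$ with $a^{(i)}_n>0$ comes from the ``infinitely many positive coefficients'' clause in the definition of an alternate power series. $\mathcal{P}2$ read literally with $n\ge 0$ gives nothing, since $n=0$ always works.
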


Before we start with the proof, we derive an auxiliary statement and recall
two theorems from topology on which our proof is based.

\begin{alem}\label{lem:krychle}
Let $\delta \in (0,1/2)$ and $r \in (0,1)$. Then there exist $\varepsilon', \varepsilon'' \in (0,\delta) $
such that the $p$-dimensional cube $[r,1]^p$ and the image $\Psi(\Sigma)$ are disjoint
where $\Sigma$
is the boundary of the cube $[\varepsilon', 1-\varepsilon'' ]^p$.
\end{alem}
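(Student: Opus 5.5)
The plan is to treat the two kinds of faces of the cube $[\varepsilon',1-\varepsilon'']^p$ separately. On a \emph{lower} face some component of $\Psi$ will be made smaller than $r$. On an \emph{upper} face some component will be made larger than $1$. Either way $\Psi(\vec y)\notin[r,1]^p$. The order of choice matters: $\varepsilon'$ is fixed first, using only $r$, $\delta$ and a uniform bound on the coefficients. Then $\varepsilon''$ is fixed depending on $\varepsilon'$.

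Let $M:=\sup_{i,n}a^{(i)}_n<\infty$, which is finite by $\mathcal{P}1$. Take $\vec y\in[\varepsilon',1-\varepsilon'']^p$ with $y_j=\varepsilon'$ for some $j$. In the series for $\psi_j$, the product $\prod_{k=0}^{n}y_{j+k}$ contains the factor $y_j=\varepsilon'$ exactly $\lfloor n/p\rfloor+1$ times, and all other factors are $\le 1$. Hence
\begin{equation*}
\psi_j(\vec y)\le M\sum_{n\ge0}\varepsilon'^{\lfloor n/p\rfloor+1}=\frac{Mp\,\varepsilon'}{1-\varepsilon'} .
\end{equation*}
This bound does not depend on $\varepsilon''$. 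So I first choose $\varepsilon'\in(0,\delta)$ so small that $Mp\varepsilon'/(1-\varepsilon')<r$. Then $\psi_j(\vec y)<r$, and $\Psi(\vec y)\notin[r,1]^p$.

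Now take $\vec y$ in the cube with $y_j=1-\varepsilon''$; all coordinates are still $\ge\varepsilon'$. I use $\mathcal{P}2$ for the index $i=j$, in two cases.
\begin{itemize}
\item If $a^{(j)}_0>1$, then $\psi_j(\vec y)\ge a^{(j)}_0(1-\varepsilon'')>1$ as soon as $\varepsilon''<1-1/a^{(j)}_0$.
\item Otherwise $a^{(j)}_0=1$ and $a^{(j)}_{n_j}>0$ for some $n_j\ge1$. Keeping only the terms $n=0$ and $n=n_j$, and bounding every factor other than $y_j$ from below by $\varepsilon'$, I get
\begin{equation*}
\psi_j(\vec y)\ge(1-\varepsilon'')\big(1+a^{(j)}_{n_j}\varepsilon'^{\,n_j}\big),
\end{equation*}
which exceeds $1$ once $\varepsilon''<a^{(j)}_{n_j}\varepsilon'^{\,n_j}/(1+a^{(j)}_{n_j}\varepsilon'^{\,n_j})$.
\end{itemize}
There are only $p$ indices, so I take $\varepsilon''\in(0,\delta)$ below the minimum of these finitely many positive thresholds. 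The thresholds depend only on the coefficients and on the already fixed $\varepsilon'$. Then every $\vec y$ on an upper face has some $\psi_j(\vec y)>1$.

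The boundary $\Sigma$ is the union of these two kinds of faces, so $\Psi(\Sigma)\cap[r,1]^p=\emptyset$. The main delicate point is the upper-face estimate: there the other coordinates may be as small as $\varepsilon'$, so the gain over $1$ is only of order $\varepsilon'^{\,n_j}$. This is why $\varepsilon''$ must be chosen after $\varepsilon'$. The lower-face bound must therefore be uniform in $\varepsilon''$, which the cycle-counting estimate provides.
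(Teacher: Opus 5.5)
Your proof is correct and follows the same route as the paper: you fix $\varepsilon'$ first so that $\psi_j<r$ on every lower face $\{y_j=\varepsilon'\}$, and then fix $\varepsilon''$, depending on $\varepsilon'$, so that $\psi_j>1$ on every upper face $\{y_j=1-\varepsilon''\}$. The differences are only in bookkeeping: your explicit geometric bound $Mp\varepsilon'/(1-\varepsilon')$ replaces the paper's appeal to continuity of $\psi_j$ at $(1,\ldots,1,0,1,\ldots,1)$ plus monotonicity, and your case split on $\mathcal{P}2$ (read, as it must be, with $n\ge 1$) replaces the paper's estimate $\psi_j(\vec y)\ge(1-\varepsilon'')\,\omega_j(\varepsilon',\ldots,\varepsilon')>1$, where $\psi_j=y_j\,\omega_j$.
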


\begin{proof}
By $\mathcal{P}1$, $(a^{(i)}_n)_{n \geq 0}$ is bounded, which implies that each of
the series $\psi_{i}$ is convergent even if $y_{j}\in[0,1)$ for only one index $j$,
$0\leq j\le p-1$, while all other variables $y_{k}$, $k\neq j$, are equal to $1$.
Therefore, $\Psi$ is continuous in the domain
$$
[0,1]^{p}\setminus\{(1,1,\ldots,1)\}
$$
Moreover, we can factor $y_{i}$ out of $\psi_{i}$, which means that there exists
another power series $\omega_{i}$ such that
 $\psi_{i}(\vec{y})=y_{i}\,\omega_{i}(\vec{y})$.
Consequently,
\begin{equation*}
\psi_i(\underbrace{1,\ldots,1}_{i\, times},\  0,
\underbrace{1,\ldots, 1 }_{p-1-i\,times})= 0.
\end{equation*}
This allows us to choose $\varepsilon'\in (0,\delta)$ such that
\begin{equation*}
\psi_i(\underbrace{1,\ldots,1}_{i\, times},\  \varepsilon',
\underbrace{1,\ldots, 1 }_{p-1-i\,times}) < r.
\quad \text{for each } i,\,1\le i\le p-1.
\end{equation*}
By $\mathcal{P}2$, the function $\psi^{(i)}(y_0,y_1, \ldots, y_{p-1})$ is non-decreasing separately in each variable.
Hence,
\begin{equation}\label{eq:Psi-under-cube}
\psi_{i}(\vec{y}) <r\ \ \text{for every\ } \ \vec{y}\in(0,1]^{i}\times \{\varepsilon'\} \times(0,1]^{p-1-i}\,.
\end{equation}

Properties $\mathcal{P}1$ and $\mathcal{P}2$ guarantee that
$\omega_{i}(\varepsilon',\varepsilon',\ldots,\varepsilon') > \omega_{i}(\vec{0}) \geq 1$.
Thus, we can  choose $\varepsilon''\in(0,\delta)$ sufficiently small so that
\begin{equation*}
(1-\varepsilon'')\,\omega_{i}(\varepsilon',\varepsilon',\ldots,\varepsilon') > 1
\quad \text{for each } i,\,1\le i\le p-1.
\end{equation*}
Hence, for $i\in \{0,1,\ldots, p-1\}$  we can estimate
\begin{equation}\label{eq:Psi-out-cube}
\psi_{i}(\vec{y})\geq (1-\varepsilon'')\,\omega_{i}(\varepsilon',\varepsilon',\ldots,\varepsilon')>1
\ \ \text{for every}\ \ \vec{y}\in[\varepsilon',1]^{i}\times\{1-\varepsilon''\}\times[\varepsilon',1]^{p-1-i}.
\quad
\end{equation}

Now it suffices to realize  that a vector $\vec{y} = (y_0,y_1, \ldots, y_{p-1})^T$
belongs to $\Sigma$ if and only if $y_i\in [\varepsilon', 1-\varepsilon'']$
for each $i\in \{0,1,\ldots, p-1\}$ and $y_j$ equals $\varepsilon'$ or $1-\varepsilon''$
for at least one index $j$.
Inequalities  \eqref{eq:Psi-under-cube} and \eqref{eq:Psi-out-cube} mean that for every
point $\vec{y}\in \Sigma$. $\psi_j(\vec{y}) <r$ or $\psi_j(\vec{y}) >1$ hold.
Consequently, $\Psi(\vec{y})$ does not belong to the cube $[r,1]^p$.
\end{proof}

We shall need a classical theorem due to E.~L.~J.~Bouwer that is recalled below
\cite{Brouwer-domain}.

\begin{athm}[Brouwer's Theorem on the Invariance of Domain]\label{thm:invariance}
Let $U$ be an open subset of the Euclidean space\textbf{ $\mathbb{R}^{p}$},
$p\in\mathbb{N}$, and $\Phi\!:U\to\mathbb{R}^{n}$ be an injective
continuous mapping. Then $\Phi(U)$$\subset\mathbb{R}^{n}$ is an
open subset and $\Phi$ is even a homeomorphism of $U$ onto $\Phi(U)$.
\end{athm}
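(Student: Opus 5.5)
This statement is Brouwer's classical theorem, and the paper only quotes it. I would therefore prove it by the standard degree-theoretic route, taking as known the basic properties of the Brouwer degree $\deg(g,B,y)$ of a continuous map $g$ on a closed ball $B\subset\mathbb{R}^p$ at a point $y\notin g(\partial B)$. These properties are homotopy invariance, local constancy in $y$ on each component of $\mathbb{R}^p\setminus g(\partial B)$, the solution property ($\deg\neq 0$ implies $y\in g(B)$), and Borsuk's odd-mapping theorem. I read the statement with $n=p$, since for $n>p$ the image is not open. That is also the only case the appendix uses.

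First I reduce to a local statement. It suffices to show that for every closed ball $B=\overline{B(x_0,\rho)}\subset U$ the point $\Phi(x_0)$ is an interior point of $\Phi(B)$. Granting this, $\Phi(V)$ is open for every open $V\subset U$, because each point of $\Phi(V)$ is the image of a centre of a small ball inside $V$. Hence $\Phi$ is an open map, so $\Phi^{-1}\colon\Phi(U)\to U$ is continuous and $\Phi$ is a homeomorphism onto the open set $\Phi(U)$. After translating we may assume $x_0=0$ and $\Phi(0)=0$.

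Next I compute a degree via a homotopy to an odd map. On $B$ I define
\[
H(x,t)=\Phi\!\left(\tfrac{x}{1+t}\right)-\Phi\!\left(\tfrac{-t x}{1+t}\right),\qquad t\in[0,1].
\]
If $H(x,t)=0$, injectivity gives $x/(1+t)=-tx/(1+t)$, i.e.\ $x=0$. So $H$ does not vanish on $\partial B\times[0,1]$. Homotopy invariance then gives $\deg(\Phi,B,0)=\deg(H(\cdot,1),B,0)$. The map $H(x,1)=\Phi(x/2)-\Phi(-x/2)$ is odd, so by Borsuk's theorem its degree is odd, in particular nonzero. Since $0\notin\Phi(\partial B)$, which is compact, there is a ball $W$ around $0$ disjoint from $\Phi(\partial B)$. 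By local constancy, $\deg(\Phi,B,y)=\deg(\Phi,B,0)\neq 0$ for all $y\in W$. By the solution property, $W\subset\Phi(B)$, which is the required interior point statement.

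The main obstacle is Borsuk's theorem itself; everything else is bookkeeping with injectivity. If I want to avoid degree theory, I would first try the elementary route of Kulpa and Tao instead. Since $\Phi|_B$ is a homeomorphism onto $\Phi(B)$ (compactness), I would suppose $0$ is not interior to $\Phi(B)$ and pick $y$ close to $0$ outside $\Phi(B)$. I would extend $\Phi|_B^{-1}$ by Tietze, perturb it near $y$, and produce a continuous map $B\to B$ without a fixed point, contradicting Brouwer's fixed point theorem.
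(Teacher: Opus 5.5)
Your argument is correct, but the paper gives no proof to compare it with. It only quotes Brouwer's result with a citation, and uses it exactly once, in the proof of Theorem~\ref{thm:range}, to conclude that $\Psi({\rm Int}(K))$ is open. You instead give the standard degree-theoretic proof, and each step holds:
the homotopy $H(x,t)=\Phi\big(x/(1+t)\big)-\Phi\big(-tx/(1+t)\big)$ keeps both arguments in $B$, it has no zeros on $\partial B$ by injectivity, and it joins $\Phi$ to an odd map. Borsuk's theorem, local constancy of the degree on the ball $W$ and the solution property then give $W\subset\Phi(B)$. The passage from this local statement to openness of $\Phi$ and continuity of $\Phi^{-1}$ is also fine.

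Your reading $n=p$ is the right one. With $\mathbb{R}^n$ as printed, the statement fails for $n>p$ (take $x\mapsto(x,0)$) and holds only vacuously for $n<p$, which itself follows from the case $n=p$. The appendix needs only $n=p$.

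Your route makes the result self-contained modulo degree theory. The real depth, however, just moves into Borsuk's odd-mapping theorem, or into Brouwer's fixed point theorem in your alternative. That is presumably why the authors simply cite it.

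The paper's single application actually needs much less. $\Psi$ is $C^1$ on $(0,1)^p$, and Lemma~\ref{lem:DerivativeCyclic} with Theorem~\ref{thm:det} gives $\det\big(J_\Psi(\vec y)\diag(y_0,\ldots,y_{p-1})\big)>0$ for every $\vec y\in(0,1)^p$. Hence $\det J_\Psi(\vec y)>0$ there. The inverse function theorem then already makes $\Psi$ an open map on $(0,1)^p$, so $\Psi({\rm Int}(K))$ is open without any appeal to invariance of domain.
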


%

We now have all the ingredients needed to prove the inclusion
$\Psi\big((0,1)^{p}\big)\supset(0,1]^{p}$.

\medskip

\begin{proof}[Proof of Theorem \ref{thm:range}]
Due to properties $\mathcal{P}1$ and $\mathcal{P}2$, whenever $\vec{y} \in (0,1)^p$,
all components of the vector $\Psi(\vec{y})$ are positive.
As $\Psi$ is continuous on $[0,1)^p$ and $\Psi(\vec{0}) = \vec{0}$, there exists
$\vec{z} \in (0,1)^p$ such that $\Psi(\vec{z}) \in (0,1)^p$, too.

Choose $\delta\in (0, 1/2)$  and $r\in (0,1)$ satisfying
$\vec{z} \in (\delta,1-\delta)^p$ and $\Psi(\vec{z}) \in (r,1)^p$.
Let us consider $\varepsilon'$ and $\varepsilon''$ assigned by Lemma \ref{lem:krychle}
to such  $\delta$ and $r$.

Let $K:=[\varepsilon', 1-\varepsilon'' ]^p$.
We again denote by $\Sigma$ the boundary of the cube $K$ and by ${\rm Int}(K)$
its interior $(\varepsilon', 1-\varepsilon'' )^p$.
By the choice of $\varepsilon'$, $\varepsilon''$ and $r$, $\vec{z}\in{\rm Int}(K)$
and $\Psi(\vec{z})\in[r,1]^p$. Hence
\begin{equation}\label{eq:intersect}
[r,1]^p \cap\, \Psi\big({\rm Int}(K)\big) \neq \emptyset.
\end{equation}

Recall that alternate power series  $\Psi$ is continuously differentiable on $(0,1)^p$
and, by Theorem \ref{thm:hlavni}, also injective on this domain.
%
On the other hand, by Lemma \ref{lem:krychle}, $[r,1]^p$ is disjoint with $\Psi(\Sigma)$.
Thus we find that
\begin{equation*}
\mathbb{R}^p = \big(\mathbb{R}^p \setminus \Psi(K)\big) \cup \Psi(\Sigma)
\cup \Psi\big({\rm Int}(K)\big).
\end{equation*}
where the union is disjoint, and
\begin{equation*}
[r,1]^p \subset \mathbb{R}^p \setminus \Psi(\Sigma)
= \big(\mathbb{R}^p \setminus \Psi(K)\big) \cup \Psi\big({\rm Int}(K)\big).
\end{equation*}
By Brouwer's Theorem on the Invariance of Domain,
$\Psi\big({\rm Int}(K)\big)$ is an open subset of $\mathbb{R}^p$.
Note also that $\Psi(K)$ is compact.
This means that the connected set $[r,1]^p$ is contained in a disjoint union
of two open sets and, therefore, it must be entirely contained within one of them.
In regard of (\ref{eq:intersect}), we conclude that
\begin{equation}\label{eq:r}
[r,1]^p \subset  \Psi\big({\rm Int}(K)\big) \subset \Psi\big((0,1)^p\big).
\end{equation}

Let us emphasize that when choosing  $ r$ in $ (0,1)$ we had to respect only one condition,
namely that $\Psi(\vec{z}) \in (r,1)^p$. Of course, $r$ can be replaced by any smaller
positive number. In other words, \eqref{eq:r} holds true for every $r\in (0,1)$, i.e.,
$(0,1]^p \subset \Psi \big((0,1)^p\big)$, as desired.
\end{proof}

We have an obvious corollary.
\begin{acor} There exists $\vec{y}\in(0,1)^{p}$ such that
\[
\psi_{0}(\vec{y})=\psi_{1}(\vec{y})=\cdots=\psi_{p-1}(\vec{y})=1.
\]
\end{acor}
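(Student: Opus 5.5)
The plan is to read the corollary off Theorem~\ref{thm:range}. Since $(1,1,\ldots,1)^T\in(0,1]^p$ and $\Psi\big((0,1)^p\big)\supset(0,1]^p$, there is a point $\vec{y}\in(0,1)^p$ with $\Psi(\vec{y})=(1,\ldots,1)^T$, which is exactly $\psi_0(\vec{y})=\cdots=\psi_{p-1}(\vec{y})=1$. So the only work is to make sure the hypotheses of Theorem~\ref{thm:range} hold for the $p$-tuple of alternate power series under consideration, that is, that the coefficients $a^{(i)}_n$ satisfy $\mathcal{P}1$ and $\mathcal{P}2$.

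The natural setting is the one coming from the Parry condition: $a^{(i)}_n=d^{(i)}_n$, where ${\bf d}^{(0)},\ldots,{\bf d}^{(p-1)}$ satisfy Definition~\ref{def:Parry}. I would check the two properties as follows.
\begin{itemize}
\item \emph{Property $\mathcal{P}1$.} Non-negativity is part of the setting. Boundedness follows from the remark after Definition~\ref{def:Parry}: every $d^{(i)}_n$ is at most $H=\max_\ell d^{(\ell)}_0$.
\item \emph{Property $\mathcal{P}2$.} Since $d^{(i)}_0$ is a positive integer, $d^{(i)}_0\ge1$. Since $d^{(i)}_n>0$ for infinitely many $n$, some $n\ge1$ has $d^{(i)}_n\ge1$. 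Then $a^{(i)}_0+a^{(i)}_n\ge2>1$.
\end{itemize}
The hypothesis of Theorem~\ref{thm:hlavni}, which is used inside the proof of Theorem~\ref{thm:range}, also holds. The series are alternate power series in the sense of the definition, with bounded non-negative coefficients, $a_0>0$, and infinitely many positive terms.

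There is no real obstacle here; the substance lies entirely in Theorem~\ref{thm:range}. The one point that deserves care is that $(1,\ldots,1)$ lies on the closed face $y_i=1$ of $(0,1]^p$ rather than in $(0,1)^p$. This is precisely why Theorem~\ref{thm:range} is stated with the half-closed cube. If it had only given $(0,1)^p$, I would instead rerun its proof with $r<1$ fixed, using the already established inclusion $[r,1]^p\subset\Psi\big({\rm Int}(K)\big)$ from \eqref{eq:r}, which contains $(1,\ldots,1)$.

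Combined with the uniqueness theorem of Section~\ref{sec:sequencesParry}, setting $\beta_i:=y_i^{-1}>1$ then yields the existence part, Theorem~\ref{thm:Existence}. This needs the additional standard verification that the quasi-greedy expansions of this base are indeed the ${\bf d}^{(\ell)}$, but that verification is not part of the corollary.
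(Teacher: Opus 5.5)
Your proposal is correct and is exactly the paper's argument: the corollary follows from Theorem~\ref{thm:range} applied to the point $(1,\ldots,1)^T$, which lies in the half-closed cube $(0,1]^p$. Your check of $\mathcal{P}1$ and $\mathcal{P}2$ for Parry sequences is not needed for the corollary itself, since these are standing hypotheses of the appendix, but it is the same check the paper records in Remark~\ref{rem:jinydukaz}.
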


\begin{arem}\label{rem:jinydukaz}
Note that Properties $\mathcal{P}1$ and $\mathcal{P}2$ are fulfilled for the alternate power
series associated with a list ${\bf d}^{(0)}, {\bf d}^{(1)}, \ldots, {\bf d}^{(p-1)}$ of
sequences  satisfying the Parry condition, see Definition \ref{def:Parry}.
If we use the solution $\vec{y} = (y_0,y_1, \ldots, y_{p-1})^T$ from the previous corollary and put
$\beta_i :=  y_i^{\,-1}$, $0\le i\le p-1$, we get a new proof of Theorem \ref{thm:Existence}.
\end{arem}

\end{document}